\makeatletter \@namedef{subjclassname@2010}{
  \textup{2020} Mathematics Subject Classification}
\newtheorem{thm}{Theorem}[section]
\newtheorem{cor}[thm]{Corollary}
\newtheorem{lem}[thm]{Lemma}
\newtheorem{pro}[thm]{Proposition}
\theoremstyle{remark}
\newtheorem*{rema}{Remark}
\theoremstyle{definition}
\newtheorem{exa}[thm]{\textbf{Example}}
\newcommand{\ran}{\operatorname{ran}}
\newcommand{\R}{\mathbb{R}}
\newcommand{\N}{\mathbb{N}}
\newcommand{\C}{\mathbb{C}}
\begin{document}

\title[Operators with self-adjoint or normal powers]{Unbounded operators having self-adjoint or normal powers and some related results}
\author[S. Dehimi and M. H. Mortad]{Souheyb Dehimi and Mohammed Hichem Mortad$^*$}

\dedicatory{}
\thanks{* Corresponding author.}
\date{}
\keywords{Unbounded operators; Closed operators; Self-adjoint
operators; Spectrum and resolvent set; Powers of operators; Normal
operators; Hyponormal operators; Subnormal operators; Quasinormal
operators; Paranormal operators; Square roots; Relatively prime
numbers}

\subjclass[2010]{Primary 47B25, Secondary 47B20, 47A10, 47A05.}

\address{(The first author) Department of Mathematics, Faculty of Mathematics and Informatics,
University of Mohamed El Bachir El Ibrahimi, Bordj Bou Arréridj,
El-Anasser 34030, Algeria.}

\email{souheyb.dehimi@univ-bba.dz, sohayb20091@gmail.com}

\address{(The corresponding and second author) Laboratoire d'analyse mathématique et applications. Département
de Mathématiques, Université Oran 1, Ahmed Ben Bella, B.P.
1524, El Menouar, Oran 31000, Algeria.}

\email{mhmortad@gmail.com, mortad.hichem@univ-oran1.dz.}

\begin{abstract}
We show that a densely defined closable operator $A$ such that the
resolvent set of $A^2$ is not empty, is necessarily closed. This
result is then extended to the case of a polynomial $p(A)$. We also
generalize a recent result by Sebestyén-Tarcsay concerning the
converse of a result by J. von Neumann. Other interesting
consequences are also given, one of them being a proof that if $T$
is a quasinormal (unbounded) operator such that $T^n$ is normal for
some $n\geq2$, then $T$ is normal. By a recent result by
Pietrzycki-Stochel, we infer that a closed subnormal operator such
that $T^n$ is normal, must be normal.

Another remarkable result is the fact that a hyponormal operator
$A$, bounded or not, such that $A^p$ and $A^q$ are self-adjoint for
some co-prime numbers $p$ and $q$, is self-adjoint. It is also shown
that an invertible operator (bounded or not) $A$ for which $A^p$ and
$A^q$ are normal for some co-prime numbers $p$ and $q$, is normal.
These two results are shown using Bézout's theorem in
arithmetic.
\end{abstract}

\maketitle

\section*{Notation}

First, we assume that readers have some familiarity with the
standard notions and results in operator theory (see e.g.
\cite{Mortad-Oper-TH-BOOK-WSPC} and \cite{SCHMUDG-book-2012} for
some background). We do recall most of the needed notions though.
First, note that in this paper all operators are linear.

Let $H$ be a complex Hilbert space and let $B(H)$ be the algebra of
all bounded linear operators defined from $H$ into $H$.

If $S$ and $T$ are two linear operators with domains $D(S)\subset H$
and $D(T)\subset H$ respectively, then $T$ is said to be an
extension of $S$, written $S\subset T$, when $D(S)\subset D(T)$ and
$S$ and $T$ coincide on $D(S)$.

The product $ST$ and the sum $S+T$ of two operators $S$ and $T$ are
defined in the usual fashion on the natural domains:

\[D(ST)=\{x\in D(T):~Tx\in D(S)\}\]
and
\[D(S+T)=D(S)\cap D(T).\]

When $\overline{D(T)}=H$, we say that $T$ is densely defined. In
such case, the adjoint $T^*$ exists and is unique.

An operator $T$ is called closed if its graph is closed in $H\oplus
H$. $T$ is called closable if it has a closed extension. If $T$ is
densely defined, then $T$ is closable iff $T^*$ is densely defined.
The smallest closed extension of $T$ is called its closure, noted
$\overline{T}$.

We say that $T$ is symmetric if
\[<Tx,y>=<x,Ty>,~\forall x,y\in D(T).\]
If $T$ is densely defined, we say that $T$ is self-adjoint when
$T=T^*$; normal if $T$ is \textit{closed} and $TT^*=T^*T$. Observe
that a densely defined operator $T$ is symmetric iff $T\subset T^*$.

A symmetric operator $T$ is called positive if
\[<Tx,x>\geq 0, \forall x\in D(T).\]
Recall that the absolute value of a closed $T$ is given by
$|T|=\sqrt{T^*T}$ where $\sqrt{\cdot}$ designates the unique
positive square root of $T^*T$, which is self-adjoint and positive
by the closedness of $T$. It is also known that $D(T)=D(|T|)$ when
$T$ is closed and densely defined.

We say that $B\in B(H)$ commutes with a linear operator $A$ with
domain $D(A)\subset H$ when $BA\subset AB$, that is when $BAx=ABx$
for all $x\in D(A)\subset D(AB)$.

Let $A$ be an injective operator (not necessarily bounded) from
$D(A)$ into $H$. Then $A^{-1}: \ran(A)\rightarrow H$ is called the
inverse of $A$, with $D(A^{-1})=\ran(A)$.

If the inverse of an unbounded operator is bounded and everywhere
defined (e.g. if $A:D(A)\to H$ is closed and bijective), then $A$ is
said to be boundedly invertible. In other words, such is the case if
there is a $B\in B(H)$ such that
\[AB=I\text{ and } BA\subset I.\]
If $A$ is boundedly invertible, then it is closed. Recall also that
$T+S$ is closed (resp. closable) if $S\in B(H)$ and $T$ is closed
(resp. closable), and that $ST$ is closed (resp. closable) if
 e.g. $S$ is closed (resp. closable) and $T\in B(H)$.

Based on the bounded case and the previous definition, we say that
an unbounded $A$ with domain $D(A)\subset H$ is right invertible if
there exists an everywhere defined $B\in B(H)$ such that $AB=I$; and
we say that $A$ is left invertible if there is an everywhere defined
$C\in B(H)$ such that $CA\subset I$. Clearly, if $A$ is left and
right invertible simultaneously, then $A$ is boundedly invertible.

The spectrum of unbounded operators is defined as follows: Let $A$
be an operator. The resolvent set of $A$, denoted by $\rho(A)$, is
defined by
\[\rho(A)=\{\lambda\in\C:~\lambda I-A\text{ is bijective and }(\lambda I-A)^{-1}\in B(H)\}.\]
The complement of $\rho(A)$, denoted by $\sigma(A)$, i.e.
\[\sigma(A)=\C\setminus \rho(A)\]
is called the spectrum of $A$.

Clearly, $\lambda\in \rho(A)$ iff there is a $B\in B(H)$ such that
\[(\lambda I-A)B=I\text{ and } B(\lambda I-A)\subset I.\]

Also, recall that if $A$ is a linear operator which is not closed,
then $\sigma(A)=\C$. Recall also that
\[\sigma(A^n)=[\sigma(A)]^n\]
when $A$ is closed ($n\in\N$).

Let us now recall some rudimentary facts about matrices of
non-necessarily bounded operators. Let $H$ and $K$ be two Hilbert
spaces and let $A:H\oplus K\to H\oplus K$ (we may also use $H\times
K$ instead of $H\oplus K$) be defined by
\begin{equation}\label{matrix reprenstation UNBD EQU}
A=\left(
      \begin{array}{cc}
        A_{11} & A_{12} \\
        A_{21} & A_{22} \\
      \end{array}
    \right)
\end{equation}
where $A_{11}\in L(H)$, $A_{12}\in L(K,H)$, $A_{21}\in L(H,K)$ and
$A_{22}\in L(K)$ are not necessarily bounded operators. If $A_{ij}$
has a domain $D(A_{ij})$ with $i,j=1,2$, then
\[D(A)=(D(A_{11})\cap D(A_{21}))\times (D(A_{12})\cap D(A_{22}))\]
is the natural domain of $A$. So if $(x_1,x_2)\in D(A)$, then
\[A\left(
     \begin{array}{c}
       x_1 \\
       x_2 \\
     \end{array}
   \right)=\left(
             \begin{array}{c}
               A_{11}x_1+A_{12}x_2\\
               A_{21}x_1+A_{22}x_2 \\
             \end{array}
           \right).
\]
As is customary, we allow the abuse of notation $A(x_1,x_2)$.
Readers should also be careful when dealing with products of
matrices of (unbounded) operators as they may encounter some issues
with their domains.

Also, recall that the adjoint of $\left(
      \begin{array}{cc}
        A_{11} & A_{12} \\
        A_{21} & A_{22} \\
      \end{array}
    \right)$ is not always $\left(
      \begin{array}{cc}
        A^*_{11} & A^*_{21} \\
        A^*_{12} & A^*_{22} \\
      \end{array}
    \right)$ (even when all domains are dense including the main domain $D(A)$) as
    known counterexamples show. Nonetheless, e.g.
    \[\left(
        \begin{array}{cc}
          A & 0 \\
          0 & B \\
        \end{array}
      \right)^*=\left(
        \begin{array}{cc}
          A^* & 0 \\
          0 & B^* \\
        \end{array}
      \right)\text{ and } \left(
                            \begin{array}{cc}
                              0 & C \\
                              D & 0 \\
                            \end{array}
                          \right)^*=\left(
                            \begin{array}{cc}
                              0 & D^* \\
                              C^* & 0 \\
                            \end{array}
                          \right)
    \]
if $A$, $B$, $C$ and $D$ are all densely defined. See e.g.
\cite{tretetr-book-BLOCK} for proofs of other results which will be
used below, and for more about unbounded operator matrices.

In the end, we recall some definitions of unbounded non-normal
operators. A densely defined operator $A$ with domain $D(A)$ is
called hyponormal if
\[D(A)\subset D(A^*)\text{ and } \|A^*x\|\leq\|Ax\|,~\forall x\in D(A).\]

A densely defined linear operator $A$ with domain $D(A)\subset H$,
is said to be subnormal when there are a Hilbert space $K$ with
$H\subset K$, and a normal operator $N$ with $D(N)\subset K$ such
that
\[D(A)\subset D(N)\text{ and } Ax=Nx \text{ for all} x\in D(A).\]

A quasinormal operator $A$ is a closed densely defined one such that
$AA^*A=A^*A^2$ (or $AA^*A\subset A^*A^2$ as in say \cite{Jablonski
et al 2014}).

A linear operator $A:D(A)\subset H\to H$ is said to be paranormal if
\[\|Ax\|^2\leq \left\|A^2x\right\|\|x\|\]
for all $x\in D(A^2)$.

As in the bounded case, we have (see e.g. \cite{Jablonski et al
2014}, \cite{Janas-HYPO-I}, \cite{Madjak} and
\cite{Stochel-Sza-subnormality-BOOK-unpublished?}):
\[\text{Normal}\subsetneq \text{Quasinormal}\subsetneq\text{Subnormal}\subsetneq \text{Hyponormal}\subsetneq \text{Paranormal}.\]

Observe that a hyponormal operator is necessarily closable, hence so
are quasinormal and subnormal operators. Paranormal operators,
however, need not be closable. See
\cite{Daniluk-paranormals-non-closable} for a counterexample. In
fact, in \cite{Mortad-paranormal-daniluk}, the writer found a
densely defined paranormal operator $A$ such that downright
$D(A^*)=\{0\}$.

\section{Introduction}

As is known, if $A$ is a closed operator, then $A^2$ need not be
closed. There are known counterexamples, see e.g.
\cite{Mortad-34page paper square roots et al.}. On the other hand,
there are closable operators having closed squares, e.g. if
$\mathcal{F}_0$ denotes the restriction of the $L^2(\R)$-Fourier
transform to the dense subspace $C_0^{\infty}(\R)$ (the space of
infinitely differentiable functions with compact support), then it
is well known that
\[D(\mathcal{F}_0^2)=\{0\}.\]
Hence $\mathcal{F}_0$ is unclosed whilst $\mathcal{F}_0^2$ is
trivially closed on $\{0\}$. In fact, there is an unclosable
operator $A$, which is everywhere defined and such that $A^2=0$ on
all of $H$, see again \cite{Mortad-34page paper square roots et
al.}.

If $A$ is a closable operator such that $\overline{A}^2$ is
self-adjoint, then $A^2$ need not be self-adjoint. A simple example
is to take $A$ to be the restriction of the identity operator $I$
(on $H$) to some dense (non-closed) subspace $D$ of $H$. Denote this
restriction by $I_D$. Then $\overline{A}^2=I$ fully on $H$ and so
$\overline{A}^2$ is self-adjoint. However, $A^2$ is not self-adjoint
for $A^2=I_D$ and so $A^2$ is not even closed.

What about the converse? I.e. if $A$ is closable and $A^2$ is
self-adjoint, then could it be true that $\overline{A}^2$ is
self-adjoint? A positive answer can be obtained if one comes to show
that if $A$ is a closable operator with a self-adjoint square, then
$A$ is closed. This question, and other related and more general
ones are investigated in the present paper.

There are known conditions for which $p(A)$ is closed whenever $A$
is closed, where $p$ is a complex polynomial in one variable of
degree $n$ say. For instance, if $A$ is a closed operator in some
Hilbert (or even Banach) space with domain $D(A)$ such that
$\sigma(A)\neq\C$, then $p(A)$ is closed on $D(A^n)$.  See e.g. pp.
347-348 in \cite{Dautray-Lions-VOL2}. Also, K. Schm\"{u}dgen showed
in \cite{SCHMUDG-1983-An-trivial-domain} that if $A$ is a densely
defined closed and symmetric operator, then $p(A)$ is closed. J.
Stochel proved this result in the case of unbounded subnormal
operators in \cite{Stochel-IEOT-2002}. A related paper to some of
our results (especially Theorem \ref{26/07/2020 THM}) is
\cite{Stochel-Sza-domination-2003}. In fact, in an unpublished work
yet \cite{Stochel-Sza-subnormality-BOOK-unpublished?}, it is shown
that if $A$ is a paranormal closed operator, then $A^n$ is closed
for every $n\in\N$. It is one of our aims to investigate the
converse of some of these results.

Before announcing an interesting application of some of our results,
recall first a well-known result by the legendary J. von Neumann
stating that if $T$ is a densely defined closed operator, then both
$TT^*$ and $T^*T$ are self-adjoint (and positive). Amazingly, no one
had studied the converse until very recently, i.e. until
\cite{Sebestyen-Tarcsay-TT* von Neumann T closed}. The outcome is
quite striking. Indeed, in the previous reference, the writers
Sebestyén-Tarcsay discovered that if $TT^*$ and $T^*T$ are both
self-adjoint, then $T$ must be closed. Then Gesztesy-Schm\"{u}dgen
 provided in \cite{Gesztesy-Schmudgen-AA*} a simpler proof based on a
technique using matrices of unbounded operators. Notice that
Gesztesy-Schm\"{u}dgen's proof only work for complex Hilbert spaces
while the original proof by Sebestyén-Tarcsay works also for real
Hilbert spaces just as good. To end this remark, Sebestyén-Tarcsay
also gave a proof of their result using block operator matrices as
well as some other results (see e.g. Theorem 8.1 in
\cite{Sebestyen-Tarcsay-LMA-2019}). This result with the two
different proofs is referred to here as the
Sebestyén-Tarcsay-Gesztesy-Schm\"{u}dgen reversed von Neumann
theorem. Note that the self-adjointness of only one of $TT^*$ and
$T^*T$ is not sufficient to guarantee the closedness of $T$. This
was already noted in \cite{sebestyen-tarcsay-TT* has an extension}.
Herein, we show that if $T$ is a densely defined closable such that
$\sigma(TT^*)\cup \sigma(T^*T)\neq \C$, then $T$ is necessarily
closed.

Other consequences are also given. For example, we show that if $T$
is a quasinormal (unbounded) operator such that $T^n$ is normal for
some $n\geq2$, then $T$ must be normal. By a recent result by
Pietrzycki-Stochel in
\cite{Pietrzycki-Stochel-follow-up-2020-Conjecture curto el al.}, we
deduce that a closed subnormal operator such that $T^n$ is normal
for some $n$, is necessarily normal. These results are closely
related to others which have been of some interest recently (see
\cite{CURTO-et-AL-JFA-2020} and \cite{Pietrzycki-2020-Conjecture
curto el al.}).

Other remarkable results are shown by invoking Bézout's theorem
in arithmetic. For instance, if a hyponormal operator $A$, bounded
or not, is such that $A^p$ and $A^q$ are self-adjoint for some
co-prime numbers $p$ and $q$, then it is self-adjoint. By the same
token, it is  shown that an invertible operator (bounded or not) $A$
for which $A^p$ and $A^q$ are normal for some co-prime numbers $p$
and $q$, is normal. Another application of Bézout's theorem is:
If a boundedly invertible $T$ commutes with both $A^p$ and $A^q$,
for some relatively prime numbers $p$ and $q$, then $T$ commutes
with $A$.

\section{Main Results: The case of monomials}

We choose to deal first with the case of squares for it is closely
related to the important notion of square roots. Then we give the
generalizations to $p(A)$ as long as these generalizations are known
to hold for us.

\begin{thm}\label{THM main A2 closed}
Let $A$ be a closable densely defined operator with domain
$D(A)\subset H$ (where $H$ could also be a Banach space here) such
that $\sigma(A^2)\neq \C$. Then $A$ is closed.
\end{thm}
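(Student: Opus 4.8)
The plan is to exploit the fact that $\sigma(A^2)\neq\C$ means there is a scalar $\lambda$ with $(\lambda I - A^2)$ boundedly invertible; pick such a $\lambda$ and set $R = (\lambda I - A^2)^{-1}\in B(H)$, so that $R$ is everywhere defined, bounded, and $(\lambda I - A^2)R = I$ while $R(\lambda I - A^2)\subset I$. Choose a square root $\mu$ of $\lambda$ (in the Banach-space setting one argues with the factorization $\lambda I - A^2 = (\mu I - A)(\mu I + A)$ on $D(A^2)$, being careful that these factors commute on that domain). The key observation is that $R$ is a two-sided-ish inverse for the product $(\mu I - A)(\mu I + A)$, and from this I want to extract that $A$ itself differs from a closed operator by a bounded perturbation.

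The main steps, in order, are: (1) write $B = \mu I - A$ and $C = \mu I + A$, both closable with $\overline{B} = \mu I - \overline{A}$, $\overline{C} = \mu I + \overline{A}$, and note $BC \subset CB$ with $CB$, $BC$ agreeing on $D(A^2)$; (2) from $(\lambda I - A^2)R = I$ deduce that $B$ is right invertible (it has a bounded everywhere-defined right inverse, namely $CR$ suitably interpreted) and similarly $C$ is right invertible with right inverse $RB$ or $BR$; (3) show that a closable densely defined operator which is right invertible via a bounded everywhere-defined operator is in fact closed — this is because $\operatorname{ran}$ of the right inverse lands in the domain and the graph can be identified with the graph of a bounded operator composed appropriately; alternatively, use that $R(\lambda I - A^2)\subset I$ forces $D(A^2)$ to be describable as $\operatorname{ran}(R)$ up to the bounded map, pinning down the domain; (4) conclude $A$ is closed, e.g. by showing $\mu I + A$ is closed and hence $A$ is, or by directly checking graph-closedness of $A$ using a Cauchy sequence $x_n \to x$, $Ax_n \to y$ and applying $R$ to the identity $(\lambda I - A^2)x_n = \lambda x_n - A^2 x_n$ — though this last route needs control of $A^2 x_n$, which we do not have for free, so the factored/right-inverse argument is the cleaner path.

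The main obstacle I anticipate is step (3)–(4): turning "right invertible by a bounded everywhere-defined operator" together with closability into genuine closedness, and doing so purely algebraically so that it survives the Banach-space setting (no adjoints, no self-adjointness available here — that is reserved for the later corollaries). Concretely, the delicate point is that $B$ right invertible gives $BR' = I$ for some $R'\in B(H)$, which by itself does not close $B$; one must also use the other factor to get a left inverse modulo domain issues, i.e. combine $B$ right invertible and $C$ right invertible with the commutation $BC\subset CB$ to manufacture a bounded left inverse of, say, $\mu I - A$, thereby making it boundedly invertible and hence closed. Once one factor $\mu I \pm A$ is shown boundedly invertible, closedness of $A$ follows immediately since it is a bounded (scalar) perturbation of a closed operator.
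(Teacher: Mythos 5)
Your overall route is the same as the paper's: pick $\lambda\in\rho(A^2)$ with bounded inverse $R$, factor $\lambda I-A^2=(\mu I-A)(\mu I+A)=(\mu I+A)(\mu I-A)$ on $D(A^2)$ with $\mu^2=\lambda$, observe that $(\mu I+A)R$ is closable (closability of $A$) and everywhere defined (its domain contains $D[(\mu I-A)(\mu I+A)R]=H$), hence bounded by the closed graph theorem, so $\mu I-A$ has a bounded everywhere-defined right inverse; then try to upgrade one factor to bounded invertibility and conclude that $A$ is closed as a scalar perturbation of a closed operator. Your steps (1)--(2) and the final reduction in (4) are sound and coincide with the paper.

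The genuine gap is precisely the step you flag and do not carry out. The lemma asserted in your step (3) (closable plus a bounded everywhere-defined right inverse implies closed) is false, as you yourself concede; and your fallback, ``combine the right invertibility of both factors with $BC\subset CB$ to manufacture a bounded left inverse,'' cannot be executed from those ingredients alone, because nowhere do you use the injectivity of $\lambda I-A^2$. The missing idea — the paper's pivotal move — is that $(\mu I+A)(\mu I-A)$ equals $\lambda I-A^2$ exactly (both have domain $D(A^2)$), so injectivity of $\lambda I-A^2$ forces $\mu I-A$ to be injective; combined with the surjectivity coming from the right inverse, $\mu I-A$ is bijective. Its set-theoretic inverse $C'$ is then defined on all of $H$ and satisfies $C'(\mu I-A)\subset I$, so composing with the identity $(\mu I-A)(\mu I+A)R=I$ gives $C'\subset(\mu I+A)R$, hence $C'=(\mu I+A)R\in B(H)$: thus $\mu I-A$ is boundedly invertible, hence closed, hence $A$ is closed. (Equivalently, once injectivity is available your left-inverse idea does work: for $x\in D(A)$ one has $(\mu I-A)\bigl[(\mu I+A)R(\mu I-A)x-x\bigr]=0$, so $(\mu I+A)R$ is also a left inverse of $\mu I-A$.) Without this injectivity observation the argument stalls at right invertibility, which, as your own counter-remark shows, does not yield closedness.
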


\begin{proof}
Let $\lambda\in \C\setminus \sigma(A^2)$. Then $A^2-\lambda I$ is
boundedly invertible. Let $\alpha$ be complex and such that
$\alpha^2=\lambda$ and write
\[A^2-\lambda I=(A-\alpha I)(A+\alpha I)=(A+\alpha I)(A-\alpha I).\]
Letting $B$ to be the bounded inverse of $A^2-\lambda I$, we see
that
\[I=(A^2-\lambda I)B=(A-\alpha I)(A+\alpha I)B.\]
Since $A$ is closable, so is $A+\alpha I$. Hence $(A+\alpha I)B$ too
is closable. But
\[H=D[(A-\alpha I)(A+\alpha I)B]\subset D[(A+\alpha I)B],\]
whereby $(A+\alpha I)B$ becomes everywhere defined. Therefore,
$(A+\alpha I)B$ must be in $B(H)$ by invoking the closed graph
theorem. In other words, $A-\alpha I$ is right invertible, i.e. it
possesses an everywhere defined bounded right inverse. Now, by the
first displayed formula, we see that $A-\alpha I$ is one-to-one as
well. Thus, $A-\alpha I$ is bijective. So, let $C$ be a left inverse
(a priori not necessarily bounded) of $A-\alpha I$. Clearly, $C$
must be defined on all of $H$. Moreover,
\[C(A-\alpha I)\subset I\Longrightarrow C(A-\alpha I)(A+\alpha I)B\subset (A+\alpha I)B\Longrightarrow C\subset (A+\alpha I)B\]
and so $C=(A+\alpha I)B$ as they are both defined on all of $H$.
This actually says that $A-\alpha I$ is boundedly invertible.
Accordingly, $A-\alpha I$ is closed or merely $A$ is closed, as
needed.
\end{proof}

The following simple consequence seems to have some interest.

\begin{cor}
If $A$ is a closable (unclosed) operator such that $A^2$ is closed,
then
\[\sigma(A^2)=\C.\]
\end{cor}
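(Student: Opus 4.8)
The plan is simply to argue by contraposition from Theorem \ref{THM main A2 closed}. Assume toward a contradiction that $\sigma(A^2)\neq\C$. Since $A$ is closable (and densely defined, as is understood throughout this section), Theorem \ref{THM main A2 closed} applies verbatim and yields that $A$ is closed --- contradicting the standing hypothesis that $A$ is unclosed. Therefore $\rho(A^2)=\emptyset$, i.e.\ $\sigma(A^2)=\C$, which is the claim.

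It is worth noticing what role the hypothesis ``$A^2$ is closed'' plays. If $A^2$ were \emph{not} closed, then $\sigma(A^2)=\C$ would already follow from the elementary fact (recalled in the Notation section) that a non-closed operator has full spectrum; so the only interesting content is the case when $A^2$ \emph{is} closed, and the corollary says that even then one still has $\sigma(A^2)=\C$. Thus the closedness of $A^2$, which one might naively hope transfers some spectral information back to $A$, is powerless to do so once $A$ itself fails to be closed.

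There is no genuine obstacle here: the corollary is a formal consequence of the theorem. Should a self-contained argument be preferred, one could instead take a hypothetical $\lambda\in\rho(A^2)$, choose $\alpha$ with $\alpha^2=\lambda$, and rerun the factorization $A^2-\lambda I=(A-\alpha I)(A+\alpha I)=(A+\alpha I)(A-\alpha I)$ exactly as in the proof of Theorem \ref{THM main A2 closed} to conclude that $A-\alpha I$, hence $A$, is closed; but this merely reproves the theorem, so citing it directly is the cleaner route.
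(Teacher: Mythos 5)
Your proposal is correct and matches the paper's own argument: the corollary is deduced by contradiction (equivalently, contraposition) from Theorem \ref{THM main A2 closed}, exactly as you do. Your additional remarks on the role of the closedness of $A^2$ are accurate but not needed for the proof.
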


\begin{proof}If $\sigma(A^2)\neq \C$, Theorem \ref{THM main A2
closed} yields the closedness of $A$ which is a contradiction.

\end{proof}

\begin{rema}
The assumption $\sigma(A^2)\neq \C$ made in Theorem \ref{THM main A2
closed} may not just be dropped. A simple counterexample is to
consider a closable non-closed $A$ such that $A^2$ is closable but
unclosed. Then $\sigma(A^2)=\C$. An explicit example would be to
take $A=I_D$, the identity operator restricted to some dense
subspace $D$.
\end{rema}

\begin{rema}The closability is indispensable for the result to hold.
For example and as alluded to above, there are non-closable
\textit{everywhere defined} operators $T$ (i.e. $D(T)=H$) such that
$T^2=0$ everywhere on $H$. Clearly
\[\sigma(T^2)=\{0\}\neq\C\]
and yet $T$ is not even closable.

Similarly, there are everywhere defined unclosable operators $T$
such that $T^2=I$ on all of $H$, and hence $\sigma(T^2)=\{1\}$.
These examples may be consulted in \cite{Mortad-34page paper square
roots et al.}.

\end{rema}

\begin{cor}\label{A2 s.a. A closable A closed}
Let $A$ be a closable densely defined operator such that $A^2$ is
self-adjoint. Then $A$ is closed.
\end{cor}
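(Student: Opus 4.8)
The plan is to reduce the statement to Theorem \ref{THM main A2 closed}, which has already done all the real work. The only extra ingredient needed is the standard spectral fact that a self-adjoint operator has real spectrum. So first I would recall that, since $A^2$ is self-adjoint, it is in particular densely defined and closed, and $\sigma(A^2)\subset \R$; hence, for instance, $i\not\in\sigma(A^2)$, so that $\sigma(A^2)\neq\C$. (Alternatively one can argue directly: for self-adjoint $S$ and $\lambda\notin\R$, $S-\lambda I$ is boundedly invertible, which is precisely $\lambda\in\rho(S)$.)

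With that in hand, the hypotheses of Theorem \ref{THM main A2 closed} are all satisfied: $A$ is closable, densely defined, and $\sigma(A^2)\neq\C$. Invoking that theorem yields immediately that $A$ is closed, which is the assertion. There is essentially no obstacle here; the content is entirely inherited from Theorem \ref{THM main A2 closed}, and the corollary merely records the most natural special case, namely the one coming from von Neumann-type considerations.

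Finally, it is worth pointing out in a closing remark that this settles the question raised in the introduction: once we know $A$ is closed, we have $A=\overline{A}$ and therefore $\overline{A}^2=A^2$ is self-adjoint, so the converse implication discussed there (``$A^2$ self-adjoint $\Rightarrow\overline{A}^2$ self-adjoint'') holds true for closable densely defined $A$.
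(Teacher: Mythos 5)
Your proof is correct and is essentially identical to the paper's: both note that the self-adjointness of $A^2$ forces $\sigma(A^2)\subset\R$, hence $\sigma(A^2)\neq\C$, and then apply Theorem \ref{THM main A2 closed}.
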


\begin{proof}Recall that a self-adjoint operator has always a real spectrum (see \cite{Boucif-Dehimi-Mortad} for a new proof). Since $A^2$ is self-adjoint, $\sigma(A^2)\subset \R$.
Now, apply Theorem \ref{THM main A2 closed}.
\end{proof}

It was shown in \cite{Sebestyen-Tarcsay-self-adjoint squares} (and
also in \cite{Sebestyen-Tarcsay-LMA-2019}) that a symmetric operator
having a self-adjoint (and positive!) square must be self-adjoint as
well. As a consequence of Corollary \ref{A2 s.a. A closable A
closed}, we have a different proof of this result.

\begin{pro}\label{25/0/2020}
Let $A$ be a symmetric operator (not necessarily densely defined)
such that $A^2$ is self-adjoint. Then $A$ too is self-adjoint.
\end{pro}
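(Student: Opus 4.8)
The plan is to reduce Proposition~\ref{25/0/2020} to Corollary~\ref{A2 s.a. A closable A closed} by first upgrading the bare symmetry hypothesis to the ``closable and densely defined'' situation required there. First I would observe that any symmetric operator is closable: symmetry gives $A\subset A^*$, and $A^*$ is always a closed operator (as an adjoint), so $A$ has a closed extension and is therefore closable. The genuinely delicate point is \emph{density of the domain}, since the statement explicitly allows $A$ to be non-densely defined; I would handle this by showing that the self-adjointness of $A^2$ forces $\overline{D(A)}=H$ anyway. Indeed $A^2$, being self-adjoint, is in particular densely defined, and $D(A^2)\subset D(A)$, so $D(A)$ is dense. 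Hence $A$ is a densely defined closable operator with self-adjoint square, and Corollary~\ref{A2 s.a. A closable A closed} applies to yield that $A$ is closed.

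Once $A$ is known to be closed and densely defined, the remaining task is to promote ``symmetric and closed'' to ``self-adjoint.'' The standard route is to show that the deficiency spaces vanish, i.e. that $A\pm iI$ are both surjective; equivalently, that $\ker(A^*\mp iI)=\{0\}$. The key step here is to exploit the factorization $A^2+I=(A+iI)(A-iI)=(A-iI)(A+iI)$, valid on $D(A^2)$. Since $A^2$ is self-adjoint, $-1\in\rho(A^2)$, so $A^2+I$ is boundedly invertible; in particular it is surjective, which via either factorization forces $A+iI$ and $A-iI$ to be surjective as well. Combined with injectivity of $A\pm iI$ (immediate from symmetry, since $\|(A\pm iI)x\|^2=\|Ax\|^2+\|x\|^2$), we get that $A\pm iI$ are bijective, hence $\pm i\in\rho(A)$, and a closed symmetric operator with $\pm i$ in its resolvent set is self-adjoint.

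I would expect the main obstacle to be the bookkeeping around domains in the factorization argument: the identity $A^2+I=(A-iI)(A+iI)$ holds only on $D(A^2)$, so one must check carefully that surjectivity of $A^2+I$ really transfers to surjectivity of each factor (using that $(A+iI)B$ maps $H$ into $D(A)$ for $B=(A^2+I)^{-1}$, and similarly on the other side), and that the resulting bounded inverses are genuine two-sided inverses rather than merely one-sided. This is exactly the type of argument already carried out in the proof of Theorem~\ref{THM main A2 closed}, so it should go through cleanly; alternatively, one can bypass it entirely by simply citing Corollary~\ref{A2 s.a. A closable A closed} to get closedness and then running the short deficiency-index argument. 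A minor subtlety worth a sentence is the degenerate case $D(A)=\{0\}$, which is vacuously fine since then $D(A^2)=\{0\}$ is dense only if $H=\{0\}$, so it does not actually arise under the hypothesis that $A^2$ is self-adjoint on a nonzero space.
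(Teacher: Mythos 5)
Your argument is correct, but the second half takes a genuinely different route from the paper. The first half coincides: both you and the authors note that self-adjointness of $A^2$ forces $D(A)\supset D(A^2)$ to be dense, that symmetry gives closability, and both invoke Corollary~\ref{A2 s.a. A closable A closed} to obtain closedness of $A$. Where you diverge is in promoting ``closed symmetric'' to ``self-adjoint'': the paper proves $\overline{A}\subset \overline{A}^{\,*}$, deduces $A^2\subset \overline{A}^{\,2}\subset \overline{A}^{\,*}\overline{A}$, uses maximality of self-adjoint operators to get $\overline{A}^{\,2}=\overline{A}^{\,*}\overline{A}$, and then quotes Theorem 3.2 of \cite{Dehimi-Mortad-Tarcsay-1} (on the equation $A^n=A^*A$) to conclude that $\overline{A}$, hence $A$, is self-adjoint. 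You instead run a self-contained deficiency-index argument: since $\sigma(A^2)\subset\R$, the operator $A^2+I=(A\mp iI)(A\pm iI)$ is surjective, so $\ran(A\pm iI)=H$, and a densely defined symmetric operator with both these ranges full is self-adjoint. Your domain bookkeeping is sound ($D[(A\mp iI)(A\pm iI)]=D(A^2)$ exactly), and in fact your route needs even less than you claim: once $\ran(A\pm iI)=H$, self-adjointness follows directly from the standard criterion (Proposition 3.11 in \cite{SCHMUDG-book-2012}), so the closedness step via Corollary~\ref{A2 s.a. A closable A closed} and the discussion of $\pm i\in\rho(A)$ could be dispensed with entirely. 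What your approach buys is independence from the external result of \cite{Dehimi-Mortad-Tarcsay-1}; what it loses is nothing essential, and indeed it is precisely the surjectivity-of-$\ran(A-\mu I)$ strategy that the paper itself deploys later, in the proof of the polynomial generalization (Theorem~\ref{26/07/2020 THM}), of which your argument is the special case $p(z)=z^2+1$. The closing remark about $D(A)=\{0\}$ is harmless but unnecessary, since density of $D(A)$ was already secured at the outset.
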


\begin{proof}First, as $A^2$ is self-adjoint, it is densely defined, and hence so is $A$. So $A$ is a densely defined symmetric operator, i.e. it becomes closable. Let us show that $\overline{A}$ is self-adjoint.
Indeed, since $A\subset A^*$, we have
\[\overline{A}\subset A^*=\overline{A}^*.\]
Hence
\[A^2\subset \overline{A}^2\subset \overline{A}^*\overline{A}.\]
By the self-adjointness of both $A^2$ and
$\overline{A}^*\overline{A}$ as well as a known maximality argument,
it ensues that
\[\overline{A}^2=\overline{A}^*\overline{A}.\]

Theorem 3.2 in \cite{Dehimi-Mortad-Tarcsay-1} now intervenes and
gives the self-adjointness of $\overline{A}$. But Corollary \ref{A2
s.a. A closable A closed} gives the closedness of $A$, that is, $A$
is self-adjoint.
\end{proof}

In other language, we have re-shown that a symmetric square root of
a self-adjoint (necessarily positive!) is itself self-adjoint. Next,
we generalize this result to normal operators.

\begin{cor}
Let $A$ be a symmetric operator such that $A^2$ is normal. Then $A$
is self-adjoint.
\end{cor}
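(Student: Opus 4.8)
The plan is to reduce everything to Proposition \ref{25/0/2020} by upgrading the hypothesis ``$A^2$ is normal'' to ``$A^2$ is self-adjoint''. First I would record the easy structural facts: since $A^2$ is normal it is, by definition, densely defined and closed; in particular $D(A^2)$ is dense in $H$, and because $D(A^2)\subset D(A)$ the operator $A$ is itself densely defined (so $A$ is closable, being symmetric).

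Next I would verify that $A^2$ is symmetric. Let $x,y\in D(A^2)$, so that $x,y,Ax,Ay$ all lie in $D(A)$. Applying the symmetry of $A$ twice,
\[\langle A^2x,y\rangle=\langle A(Ax),y\rangle=\langle Ax,Ay\rangle=\langle x,A(Ay)\rangle=\langle x,A^2y\rangle,\]
which says precisely that $A^2\subset(A^2)^*$.

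Now comes the crucial step: a normal operator $N$ satisfies $\|Nx\|=\|N^*x\|$ for all $x$, and in particular $D(N)=D(N^*)$. Applying this to $N=A^2$ and combining it with the inclusion $A^2\subset(A^2)^*$ obtained above, the domains coincide and hence $A^2=(A^2)^*$; that is, $A^2$ is self-adjoint. Finally, $A$ is then a symmetric operator whose square is self-adjoint, so Proposition \ref{25/0/2020} applies verbatim and gives that $A$ is self-adjoint.

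The only point requiring a moment's care is the implication ``normal $+$ symmetric $\Rightarrow$ self-adjoint'', which rests entirely on the domain equality $D(N)=D(N^*)$ valid for every normal $N$; the symmetry of $A^2$ and the invocation of Proposition \ref{25/0/2020} are then immediate, so I do not expect any serious obstacle.
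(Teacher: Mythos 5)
Your proposal is correct and follows essentially the same route as the paper: show $A^2$ is symmetric, upgrade ``symmetric $+$ normal'' to self-adjoint, and then invoke Proposition \ref{25/0/2020}. The only difference is cosmetic --- you verify the symmetry of $A^2$ by a direct inner-product computation and spell out the standard fact $D(N)=D(N^*)$ for normal $N$, whereas the paper gets $A^2\subset (A^*)^2\subset (A^2)^*$ from $A\subset A^*$ and cites the symmetric-normal-implies-self-adjoint fact as known.
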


\begin{proof}That $A$ is densely defined is clear. Since $A\subset
A^*$, we have
\[A^2\subset (A^*)^2\subset (A^2)^*.\]

In other words, $A^2$ is symmetric. But, and as is known, symmetric
operators that are normal are self-adjoint. Therefore, $A^2$ is
self-adjoint, and so $A$ too is self-adjoint by the foregoing
results.
\end{proof}

\begin{rema}
The above proof is algebraic. There is also a spectral proof.
Indeed, since $A$ is symmetric, it follows that $A^2$ is positive.
However, a normal operator with a positive spectrum is self-adjoint.
Then Proposition \ref{25/0/2020} does the remaining job.
\end{rema}

The following result contains some important conclusions (observe
that it generalizes Proposition \ref{25/0/2020}).

\begin{thm}\label{THM BIG THM ANOTHER}
Let $A$ be a linear operator with domain $D(A)$ such that $A^2$ is
self-adjoint and $D(A)\subset D(A^*)$. Then $A$ is closed,
$D(A)=D(A^*)$, $(A^*)^2=A^2$ and $D(AA^*)=D(A^*A)$.
\end{thm}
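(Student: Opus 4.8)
The plan is to reduce to the previously established results by showing that the hypothesis $D(A) \subset D(A^*)$ forces $A$ to be densely defined and, after taking closures, self-adjoint. First I would observe that since $A^2$ is self-adjoint, it is in particular densely defined, so $D(A^2) \subset D(A)$ gives that $D(A)$ is dense; hence $A^*$ makes sense and the inclusion $D(A) \subset D(A^*)$ together with the agreement of $A$ and $A^*$ on... — wait, the hypothesis is only $D(A) \subset D(A^*)$, not that they coincide there, so $A$ need not be symmetric. Instead I would argue as in the proof of Proposition~\ref{25/0/2020}: from $D(A) \subset D(A^*)$ we get $A \subset A^{**} = \overline{A}$ and also that $A^*$ restricted to $D(A)$ is a well-defined operator, but the cleaner route is to pass to $\overline{A}$ directly. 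Since $A$ is densely defined it is closable, and $\overline{A}^* = A^*$, so $D(A) \subset D(A^*) = D(\overline{A}^*)$.

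Next I would run the chain of inclusions that produces the key operator identity. From $A^2 \subset \overline{A}^2$ one wants to sandwich $A^2$ between $\overline{A}^2$ and a product of $\overline{A}$ with its adjoint so that self-adjointness plus maximality collapses the inclusions to equalities. Concretely, I expect to show $A^2 \subset \overline{A}^*\,\overline{A}$ (or the analogous inclusion with $\overline{A}\,\overline{A}^*$) using $D(A) \subset D(\overline{A}^*)$, and since both $A^2$ and $\overline{A}^*\overline{A}$ are self-adjoint, the inclusion between them is an equality: $\overline{A}^2 = \overline{A}^*\overline{A}$. At this point Theorem~3.2 of \cite{Dehimi-Mortad-Tarcsay-1} applies and yields that $\overline{A}$ is self-adjoint. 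Then Corollary~\ref{A2 s.a. A closable A closed}, applied to the closable densely defined operator $A$ with self-adjoint square, gives that $A$ itself is closed, so $A = \overline{A}$ is self-adjoint. This immediately delivers $D(A) = D(A^*)$, $(A^*)^2 = A^2$, and, since a self-adjoint operator $A$ satisfies $AA^* = A^*A = A^2$ on $D(A^2)$, also $D(AA^*) = D(A^*A)$.

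The step I expect to be the main obstacle is establishing the inclusion $A^2 \subset \overline{A}^*\overline{A}$ (equivalently identifying which of the two von Neumann products $A^2$ sits inside), because unlike the symmetric case in Proposition~\ref{25/0/2020} we only have a domain inclusion and not $A \subset A^*$, so one must check carefully that for $x \in D(A^2)$ one has $Ax \in D(\overline{A}^*)$ with $\overline{A}^* Ax = A^2 x$; this requires using $Ax \in D(A) \subset D(A^*)$ and matching $A^* A x$ against $A^2 x$ via the self-adjointness of $A^2$ and a density argument. Once that technical point is secured, the rest is a routine invocation of the maximality of self-adjoint operators and of the cited Theorem~3.2 and Corollary~\ref{A2 s.a. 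A closable A closed}.
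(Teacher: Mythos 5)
The step you yourself flag as the main obstacle --- establishing $A^2 \subset \overline{A}^{\,*}\overline{A}$ --- is not a technical point to be secured; it is false in general, and with it the whole strategy collapses. The hypothesis $D(A)\subset D(A^*)$ gives, for $x \in D(A^2)$, that $Ax \in D(A^*)=D(\overline{A}^{\,*})$, but it gives no identification of $A^*Ax$ with $A^2x$: that identification is exactly what symmetry ($A\subset A^*$) supplied in Proposition \ref{25/0/2020}, and no density argument can recover it from a domain inclusion alone. Decisively, your route ends with ``$A=\overline{A}$ is self-adjoint'', a conclusion strictly stronger than the theorem's and false under its hypotheses: take $A=iB$ with $B$ unbounded self-adjoint; then $D(A)=D(A^*)$ and $A^2=-B^2$ is self-adjoint, yet $A$ is skew-adjoint, not self-adjoint. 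So any argument from these hypotheses that concludes self-adjointness must be wrong, and your derivations of $D(A)=D(A^*)$, $(A^*)^2=A^2$ and $D(AA^*)=D(A^*A)$, all drawn from that self-adjointness, are unsupported. Only the first part of your proposal is sound: $A$ is densely defined because $A^2$ is, closable because $D(A)\subset D(A^*)$ with $D(A)$ dense, hence closed by Corollary \ref{A2 s.a. A closable A closed} --- which is also how the paper begins.

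For the remaining assertions the paper never compares $A^2$ with $A^*A$ as operators; it works with domains and spectra. From $(A^*)^2\subset (A^2)^*=A^2$ and $D(A)\subset D(A^*)$ one obtains the chain $D(AA^*)\subset D[(A^*)^2]\subset D(A^2)\subset D(A^*A)$. Since $A$ is now closed, $AA^*$ and $A^*A$ are nonnegative self-adjoint, and Theorem 9.4 of \cite{Weidmann} converts the outer domain inclusion into $D(\sqrt{AA^*})\subset D(\sqrt{A^*A})$, i.e. $D(A^*)=D(|A^*|)\subset D(|A|)=D(A)$, giving $D(A)=D(A^*)$. Then a spectral mapping argument shows $\rho((A^*)^2)$ is nonempty and contained in $\rho(A^2)$; choosing $\alpha$ there, the inclusion $(A^*)^2-\alpha I\subset A^2-\alpha I$ with the left side onto and the right side injective forces equality by maximality, so $(A^*)^2=A^2$, and the domain chain collapses to $D(AA^*)=D(A^*A)$. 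If you want to repair your write-up, keep your closedness paragraph and replace the self-adjointness reduction by this domain--spectrum argument.
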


\begin{proof}
As above, $A$ is densely defined. Since $D(A)\subset D(A^*)$, it
follows that $A$ is closable. Hence $A$ is closed by Corollary
\ref{A2 s.a. A closable A closed}. So, it only remains to show that
$D(A)=D(A^*)$. To this end, observe that
\[(A^*)^2\subset (A^2)^*=A^2.\]
Since $D(A)\subset D(A^*)$, we get
\begin{equation}\label{mlmlmlkjjkhhjdfkjfgdjkfghljgfhoigotook,n,c,,c,c,}
D(AA^*)\subset D[(A^*)^2]\subset D(A^2)\subset D(A^*A).
\end{equation}

By Theorem 9.4 in \cite{Weidmann}, we obtain $D(\sqrt{AA^*})\subset
D(\sqrt{A^*A})$ for $AA^*$ and $A^*A$ are self-adjoint and positive.
But $\sqrt{AA^*}=|A^*|$ and $\sqrt{A^*A}=|A|$. So, by the closedness
of both $A$ and $A^*$, we finally infer that
\[D(A^*)=D(|A^*|)\subset D(|A|)=D(A)\]
thereby $D(A)=D(A^*)$.

To prove the other claimed properties, start with the inclusion
$(A^*)^2\subset A^2$ which was obtained above. Since $A^2$ is
self-adjoint, $\varnothing\neq \sigma(A^2)\subset\R$. So, let
$\lambda\in\sigma(A^2)$ and so $\lambda=\mu^2$ for some
$\mu\in\sigma(A)$. Hence $\overline{\mu}\in\sigma(A^*)$. Therefore,
\[\lambda=\overline{\lambda}=\overline{\mu}^2\in [\sigma(A^{*})]^2=\sigma({A^{*}}^2).\]
That is, $\rho ({A^{*}}^2)\subset \rho(A^2)$. On the other hand,
$\rho ({A^{*}}^2)\neq \varnothing$ as it is easy to see that
$\sigma({A^{*}}^2)\neq\C$. So let $\alpha\in \rho ({A^{*}}^2)$ and
write
\[(A^*)^2-\alpha I\subset A^2-\alpha I.\]
Since $(A^*)^2-\alpha I$ is onto and $A^2-\alpha I$ is one-to-one,
by a simple maximality result (see Lemma 1.3 in
\cite{SCHMUDG-book-2012}), it follows that
\[(A^*)^2-\alpha I=A^2-\alpha I\]
or merely $(A^*)^2=A^2$. Finally, as $D(A)=D(A^*)$ and
$(A^*)^2=A^2$, Inclusions
(\ref{mlmlmlkjjkhhjdfkjfgdjkfghljgfhoigotook,n,c,,c,c,}) become
\[D(AA^*)=D[(A^*)^2]=D(A^2)=D(A^*A),\]
marking the end of the proof.
\end{proof}

\begin{cor}\label{17/05/2021 CORO}
Let $A$ be an unbounded hyponormal operator such that $A^2$ is
self-adjoint and positive. Then $A$ too is self-adjoint.
\end{cor}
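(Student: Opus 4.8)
The plan is to show first that $\sigma(A)\subset\R$, then to transfer the problem to a \emph{bounded} hyponormal operator via a resolvent of $A$, where Putnam's inequality is available, and finally to deduce self-adjointness from normality together with real spectrum.

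First I would note that, being hyponormal, $A$ is densely defined with $D(A)\subset D(A^*)$; hence $A^*$ is densely defined and $A$ is closable, so Corollary \ref{A2 s.a. A closable A closed} (or Theorem \ref{THM BIG THM ANOTHER}) applies and shows that $A$ is in fact closed. Therefore $\sigma(A^2)=[\sigma(A)]^2$. Since $A^2$ is self-adjoint and positive, $\sigma(A^2)\subset[0,\infty)$; thus every $\mu\in\sigma(A)$ satisfies $\mu^2\geq 0$, i.e. $\mu\in\R$. So $\sigma(A)\subset\R$, and in particular $\C\setminus\R\subset\rho(A)$.

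Next, fix any real $\tau\neq 0$, so that $i\tau\in\rho(A)$ and $S:=A-i\tau I$ is closed and boundedly invertible. Since hyponormality is invariant under adding a scalar multiple of the identity, $S$ is hyponormal, and then its bounded inverse $S^{-1}\in B(H)$ is hyponormal as well; this last point is the familiar fact that the bounded inverse of a boundedly invertible hyponormal operator is hyponormal, which one checks through the polar decomposition $S=U|S|$ (here $U$ is unitary because $S$ is bijective, $|S|$ is boundedly invertible, and the hyponormality inequality for $S$ becomes, after writing $|S^*|=U|S|U^*$ and replacing $x$ by $|S|^{-1}x$, exactly the hyponormality of $S^{-1}=|S|^{-1}U^*$). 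On the other hand, $\sigma(S^{-1})\subset\{0\}\cup\{(\mu-i\tau)^{-1}:\mu\in\sigma(A)\}$, and as $\sigma(A)\subset\R$ this set is contained in the circle that is the image of $\R\cup\{\infty\}$ under the M\"obius map $z\mapsto(z-i\tau)^{-1}$. Hence $\sigma(S^{-1})$ has two-dimensional Lebesgue measure zero, and Putnam's inequality forces $(S^{-1})^*S^{-1}=S^{-1}(S^{-1})^*$, i.e. $S^{-1}$ is normal. Consequently its inverse $S=A-i\tau I$ is normal, and so is $A$. Finally, a normal operator with real spectrum is self-adjoint by the spectral theorem, which yields $A=A^*$.

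The point I expect to demand the most care is establishing that the bounded inverse $S^{-1}$ is genuinely hyponormal, so that Putnam's bounded-operator inequality may legitimately be invoked; besides this, one must make sure that the implications ``$S^{-1}$ normal $\Rightarrow$ $S$ normal $\Rightarrow$ $A$ normal'' and ``normal with real spectrum $\Rightarrow$ self-adjoint'' are carried out correctly for (possibly) unbounded operators.
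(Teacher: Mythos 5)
Your proof is correct, and its skeleton coincides with the paper's: you first get closedness of $A$ from Corollary \ref{A2 s.a. A closable A closed} (hyponormality gives $D(A)\subset D(A^*)$, hence closability), then use $\sigma(A^2)=[\sigma(A)]^2\subset[0,\infty)$ to conclude $\sigma(A)\subset\R$, and finally invoke ``closed hyponormal with real spectrum $\Rightarrow$ self-adjoint.'' The only real difference is at this last step: the paper simply quotes that fact (proof of Theorem 8 in the Dehimi--Mortad reference), whereas you reprove it from scratch via the resolvent trick --- translating by $i\tau$, passing to the bounded hyponormal inverse $S^{-1}$ (your polar-decomposition argument for its hyponormality is sound, being the unbounded analogue of $(SS^*)^{-1}\geq (S^*S)^{-1}$), and applying Putnam's inequality to a spectrum of planar measure zero, then returning to $A$ and using that a normal operator with real spectrum is self-adjoint. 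What your version buys is self-containedness (the black box is opened, at the cost of needing Putnam's inequality and some care with unbounded polar decompositions); what the paper's version buys is brevity by outsourcing exactly that lemma.
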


\begin{proof}Since $A$ is closable and $A^2$ is self-adjoint, it follows that $A$
is closed. Recall also that closed hyponormal operators having a
real spectrum are self-adjoint (see the proof of Theorem 8 in
\cite{Dehimi-Mortad-BKMS}). Let $\lambda\in\sigma(A)$ and so
$\lambda^2\in\sigma(A^2)$, i.e. $\lambda^2\geq0$ because $A^2$ is
positive. Thus $\lambda$ must be real. Consequently, $A$ is
self-adjoint.
\end{proof}

\begin{pro}\label{wxxxxxccccccccccghshgsjhgjshhdkqmfùùgh*}
Let $A$ be an unbounded quasinormal operator with $A^2={A^*}^2$.
Then $A$ is normal.
\end{pro}

\begin{proof}We may write
\[|A|^4=A^*AA^*A={A^*}^2A^2=A^4\]
and
\[|A^*|^4=AA^*AA^*=A^*A^2A^*={A^*}^4.\]
Since $A^2={A^*}^2$, we have $A^4={A^*}^4$, thereby $|A|^4=|A^*|^4$.
Upon passing to the unique positive square root, we obtain
$|A|^2=|A^*|^2$ or $A^*A=AA^*$. Since $A$ is already closed, the
normality of $A$ follows, as needed.
\end{proof}

\begin{rema}If $A\in B(H)$, then $A^2$ is self-adjoint if and only
if $A^2={A^*}^2$. This is not always the case when $A$ is closed and
densely defined. Indeed, in \cite{Dehimi-Mortad-CHERNOFF} we found a
closed densely defined operator $T$ such that
\[D(T^2)=D({T^*}^2)=\{0\}\]
(and so $T^2$ cannot be self-adjoint).

What about assuming that $D(T^2)$ is dense? This is still not
enough. Indeed, consider two self-adjoint operators $A$ and $B$ such
that $AB=BA$ on some common core but $A$ and $B$ do not commute
strongly. This is obviously not trivial and it is some kind of a
Nelson-like counterexample. To get the appropriate example, we
choose Schm\"{u}dgen's example (see Example 5.5 in
\cite{SCHMUDG-book-2012}). Then set
\[T=\left(
      \begin{array}{cc}
        0 & A \\
        B & 0 \\
      \end{array}
    \right)
\]
with $D(T)=D(B)\oplus D(A)$. Then $T$ is closed. Moreover,
$T^*=\left(
      \begin{array}{cc}
        0 & B \\
        A & 0 \\
      \end{array}
    \right)$ with $D(T^*)=D(A)\oplus D(B)$. Hence
\[T^2=\left(
      \begin{array}{cc}
        AB & 0 \\
        0 & BA\\
      \end{array}
    \right)=\left(
      \begin{array}{cc}
        BA & 0 \\
        0 & AB\\
      \end{array}
    \right)={T^*}^2\]
because $D(T^2)=D({T^*}^2)=D(AB)\oplus D(AB)$. Finally, $T^2$ cannot
be self-adjoint for if it were, then this would lead to the strong
commutativity of $A$ and $B$, which is impossible.
\end{rema}

The next result generalizes some known results in the bounded case.
Its proof is based upon a quite interesting paper by M. Uchiyama
(\cite{Uchiyama-1993-QUASINORMAL}) on quasinormality (both bounded
and unbounded operators). Unfortunately, many operator theorists
were unaware of it as could be guessed by looking at some papers
which appeared after Uchiyama's, where some of his results were
re-shown.

Before giving the alluded result, recall that when $T^n$ is densely
defined, then in general $T^{*n}\subsetneq (T^n)^*$ even when $T$
belongs to some nice class. For example, Jab{\l}o\'{n}ski et al.
constructed in \cite{Jablonski et al 2014} a quasinormal operator
$T$ such that $T^{*n}\subsetneq (T^n)^*$ for all $n\geq2$. So, the
next lemma seems to have some interest.

\begin{lem}\label{pooppopopooppopoopopiouiurthnnnn,l,;lc;x;c;xc:mcvlmcvlmc*}
Let $T$ be a quasinormal operator such that $T^n$ is densely defined
and $D[(T^n)^*]=D(T^n)$ (e.g. when $T^n$ is normal). Then
\[(T^n)^*=T^{*n}.\]
\end{lem}

\begin{proof}By the general theory, we already have that $T^{*n}\subset
(T^n)^*$. So, we only have to show that $D[(T^n)^*]\subset
D({T^*}^n)$. Since $T$ is quasinormal, $U|T|\subset |T|U$ where
$T=U|T|$ is the usual polar decomposition in terms of partial
isometries (see Proposition 1 in \cite{Stochel-Szafraniec-normal
extensions-II}, in fact $U|T|=|T|U$ as in
\cite{Uchiyama-1993-QUASINORMAL}). Hence $T^*=|T|U^*$ and
$U^*|T|\subset |T|U^*$. Since $T$ is quasinormal, $|T^n|=|T|^n$ (see
e.g. \cite{Jablonski et al 2014}). Moreover, $T^n$ is closed for $T$
is subnormal and closed (see \cite{Stochel-IEOT-2002}).

Therefore,
\[D(T^{*n})=D[(|T|U^*)^n]\supset D(U^{*n}|T|^n)=D(|T|^n)=D(|T^n|)=D(T^n)=D[(T^n)^*],\]
as wished.
\end{proof}

We are now in a position to state and prove a result on quasinormal
$n$th roots of normal operators (see
\cite{Pietrzycki-Stochel-follow-up-2020-Conjecture curto el al.} for
a related result).

\begin{thm}\label{quasinormal, Tn normal T is normal THM}
Let $T$ be a quasinormal (unbounded) operator such that $T^n$ is
normal for some $n\geq 2$. Then $T$ is normal.
\end{thm}

\begin{proof}By Corollary 3.1 in \cite{Uchiyama-1993-QUASINORMAL},
we know that
\[T^{*n}T^n=(T^*T)^n\geq (TT^*)^n\geq T^nT^{*n}\]
whenever $T$ is quasinormal and for all $n\geq2$.

A little digression, it was unclear to us which order relation was
used in the above inequalities given that M. Uchiyama did not
indicate the one he was using. However, in our case, it does not
affect our result given the assumptions we made in the theorem.
Indeed, all operators in the inequalities are self-adjoint (this
includes $T^nT^{*n}$ for by Lemma
\ref{pooppopopooppopoopopiouiurthnnnn,l,;lc;x;c;xc:mcvlmcvlmc*},
$(T^n)^*=T^{*n}$). A look at Page 230 in \cite{SCHMUDG-book-2012}
will shed light on this particular point.

Let us finish the proof now. By the normality of $T^n$, Lemma
\ref{pooppopopooppopoopopiouiurthnnnn,l,;lc;x;c;xc:mcvlmcvlmc*} and
the above inequalities, we may write
\[T^n(T^n)^*=(T^n)^*T^n=T^{*n}T^n=(T^*T)^n\geq (TT^*)^n\geq T^nT^{*n}=T^n(T^n)^*.\]
Therefore,
\[(T^*T)^n=(TT^*)^n~(=T^n(T^n)^*=(T^n)^*T^n).\]

Passing to the unique positive $nth$ root yields $|T|^2=|T^*|^2$,
that is, it yields the normality of $T$, as suggested.
\end{proof}

The next consequence might be known to some specialists, however, we
believe that it is not documented anywhere. Besides, the proof is
extremely easy once we fall back on a very recent result by P.
Pietrzycki and J. Stochel, whose proof is somewhat involved.

\begin{cor}\label{subnormal power normal is normal CORO}
Let $T$ be a closed subnormal (unbounded) operator such that $T^n$
is normal for some $n\geq 2$. Then $T$ is normal.
\end{cor}

\begin{proof}Since $T^n$ is normal, it is quasinormal. By Theorem
1.4 in \cite{Pietrzycki-Stochel-follow-up-2020-Conjecture curto el
al.}, we know that $T$ must be quasinormal. Theorem
\ref{quasinormal, Tn normal T is normal THM} then gives the
normality of $T$.

\end{proof}

As mentioned above, we present a generalization of the
Sebestyén-Tarcsay-Gesztesy-Schm\"{u}dgen reversed von Neumann
theorem. Remember in passing that when $TT^*$ and $T^*T$ are
self-adjoint, then $TT^*$ and $T^*T$ are closed and
$\sigma(TT^*),\sigma(T^*T)\subset \R$.

\begin{thm}\label{TT* T*T closed give T closed once it is closable
THM} Let $T$ be a densely defined closable such that
$\sigma(TT^*)\cup \sigma(T^*T)\neq \C$. Then $T$ is necessarily
closed.
\end{thm}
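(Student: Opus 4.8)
The plan is to deduce the statement from Theorem~\ref{THM main A2 closed} by passing to a $2\times 2$ matrix of unbounded operators, in the spirit of the Gesztesy--Schm\"udgen technique. On the Hilbert space $H\oplus H$ consider
\[Q=\begin{pmatrix}0&T^*\\ T&0\end{pmatrix},\qquad D(Q)=D(T)\oplus D(T^*).\]
Since $T$ is densely defined and closable, $T^*$ is densely defined, so $D(Q)$ is dense in $H\oplus H$. Moreover, a direct computation shows that $Q^*$ contains the densely defined operator $\begin{pmatrix}0&T^*\\ \overline{T}&0\end{pmatrix}$, hence $Q^*$ is densely defined and so $Q$ is closable. (This is precisely the point where the closability hypothesis on $T$ is needed.)

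Next I would compute $Q^2$. Chasing through the natural domains gives $D(Q^2)=D(T^*T)\oplus D(TT^*)$, and on this domain
\[Q^2=\begin{pmatrix}T^*T&0\\ 0&TT^*\end{pmatrix},\]
that is, $Q^2$ is the block-diagonal operator $T^*T\oplus TT^*$. Now pick $\lambda\in\C\setminus\big(\sigma(TT^*)\cup\sigma(T^*T)\big)$, which is possible by hypothesis. Then $\lambda I-T^*T$ and $\lambda I-TT^*$ are boundedly invertible, say with bounded everywhere defined inverses $B_1,B_2\in B(H)$, and one checks at once that $B_1\oplus B_2\in B(H\oplus H)$ satisfies $(\lambda I-Q^2)(B_1\oplus B_2)=I$ and $(B_1\oplus B_2)(\lambda I-Q^2)\subset I$. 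Hence $\lambda\in\rho(Q^2)$, and in particular $\sigma(Q^2)\neq\C$.

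Applying Theorem~\ref{THM main A2 closed} to the densely defined closable operator $Q$ (on the Hilbert space $H\oplus H$, which is allowed there) yields that $Q$ is closed. It remains to transfer closedness back to $T$: if $(x_n)\subset D(T)$ with $x_n\to x$ and $Tx_n\to z$, then $(x_n,0)\in D(Q)$, $(x_n,0)\to(x,0)$ and $Q(x_n,0)=(0,Tx_n)\to(0,z)$, so closedness of $Q$ forces $(x,0)\in D(Q)$ with $Q(x,0)=(0,z)$, i.e. $x\in D(T)$ and $Tx=z$; thus $T$ is closed. The only slightly delicate parts of this argument are the correct identification of $D(Q^2)$ and the (routine) verification that bounded invertibility of a block-diagonal operator is equivalent to bounded invertibility of each diagonal block; I do not anticipate any genuine obstacle beyond this bookkeeping.
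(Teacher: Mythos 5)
Your proposal is correct and follows essentially the same route as the paper: the paper also forms the off-diagonal block operator (its $A=\left(\begin{smallmatrix}0&T\\ T^*&0\end{smallmatrix}\right)$, unitarily equivalent to your $Q$), notes it is closable with square $TT^*\oplus T^*T$, uses the hypothesis to get $\sigma(A^2)\neq\C$, and invokes Theorem \ref{THM main A2 closed} to conclude $A$, hence $T$, is closed. Your additional verifications (density of $D(Q^*)$, the identification of $D(Q^2)$, the resolvent of the block-diagonal operator, and the transfer of closedness to $T$) are correct bookkeeping that the paper leaves implicit.
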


\begin{proof}Just write
\[A=\left(
      \begin{array}{cc}
        0 & T \\
        T^* & 0 \\
      \end{array}
    \right)
\]
which is closable. Then
\[A^2=\left(
        \begin{array}{cc}
          TT^* & 0 \\
          0 & T^*T \\
        \end{array}
      \right).
\]

By the assumption $\sigma(TT^*)\cup \sigma(T^*T)\neq \C$, we see
that neither $\sigma(TT^*)=\C$ nor $\sigma(T^*T)=\C$. Therefore,
$TT^*$ and $T^*T$ are both closed. Hence $A^2$ too is closed. Since
$\sigma(A^2)\neq \C$, Theorem \ref{THM main A2 closed} entails the
closedness of $A$ which, in turn, yields the closedness of $T$.
\end{proof}

Next, we give two simple characterizations of (unbounded) normal
operators.

\begin{cor}Let $T$ be a densely defined closable linear operator. Then
\[T\text{ is normal }\Longleftrightarrow \text{ $TT^*=T^*T$ and $\sigma(T^*T)\neq\C$.}\]
\end{cor}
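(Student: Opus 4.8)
The plan is to prove the two implications separately, using the preceding machinery. For the forward implication, suppose $T$ is normal. By definition this means $T$ is closed and $TT^*=T^*T$, so half of the right-hand side is immediate. It remains to check that $\sigma(T^*T)\neq\C$. Since $T$ is closed and densely defined, von Neumann's theorem gives that $T^*T$ is self-adjoint (and positive), hence $\sigma(T^*T)\subset\R$, and in particular $\sigma(T^*T)\neq\C$. This direction is essentially a recollection of standard facts and should take only a couple of lines.

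For the reverse implication, assume $TT^*=T^*T$ and $\sigma(T^*T)\neq\C$. The key observation is that the hypothesis $\sigma(T^*T)\neq\C$ together with $TT^*=T^*T$ forces $\sigma(TT^*)\neq\C$ as well, so that $\sigma(TT^*)\cup\sigma(T^*T)\neq\C$. Since $T$ is assumed densely defined and closable, Theorem \ref{TT* T*T closed give T closed once it is closable THM} then yields that $T$ is closed. Once $T$ is closed and densely defined, the identity $TT^*=T^*T$ is precisely the definition of normality, so we are done.

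The main (only) subtlety is making sure the equality $TT^*=T^*T$ is meaningful before $T$ is known to be closed: one needs $T$ densely defined for $T^*$ to exist, which is granted, and then $TT^*$ and $T^*T$ are well-defined operators on their natural domains, so the hypothesis $TT^*=T^*T$ (equality as operators, i.e. same domain and same action) makes sense and directly transfers $\sigma(T^*T)\neq\C$ to $\sigma(TT^*)\neq\C$. After that, everything is a direct appeal to Theorem \ref{TT* T*T closed give T closed once it is closable THM}; I do not anticipate any real obstacle.
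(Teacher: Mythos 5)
Your proposal is correct and follows essentially the same route as the paper: the forward direction via von Neumann's theorem giving $\sigma(T^*T)\subset\R$, and the converse by noting that $TT^*=T^*T$ makes $\sigma(TT^*)\cup\sigma(T^*T)=\sigma(T^*T)\neq\C$, so Theorem \ref{TT* T*T closed give T closed once it is closable THM} yields closedness and hence normality. No gaps.
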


\begin{proof}If $T$ is normal, then $TT^*=T^*T$ and $T$ is closed.
Hence $T^*T$ is self-adjoint and so $\C\neq \sigma(T^*T)\subset\R$.
Conversely, as $\sigma(T^*T)\neq\C$, $T^*T$ is closed. Hence $TT^*$
too is closed. Theorem \ref{TT* T*T closed give T closed once it is
closable THM} then yields the closedness of $T$ because
$\sigma(TT^*)\cup \sigma(T^*T)=\sigma(T^*T)\neq \C$.
\end{proof}

By calling on Proposition 1.1 in \cite{Stochel-IEOT-2002}, we have
the following result:

\begin{cor}Let $T$ be a densely defined closable linear operator such that $\sigma(TT^*)\cup \sigma(T^*T)\neq \C$.
Then either $T$ is normal or $TT^*\not\subset T^*T$ or
$T^*T\not\subset TT^*$.
\end{cor}

\begin{proof}As $\sigma(TT^*)\cup \sigma(T^*T)\neq \C$, we know that $T$ must be closed. Hence by Proposition 1.1 in \cite{Stochel-IEOT-2002},
either $TT^*=T^*T$ (i.e. $T$ is normal) or  $TT^*\not\subset T^*T$
or $T^*T\not\subset TT^*$.
\end{proof}

Before giving another related result, we give an example.

\begin{exa}(\cite{Mortad-div-lin-oper})
Consider the following two operators defined by
\[Af(x)=e^{2x}f(x) \text{ and } Bf(x)=(e^{-x}+1)f(x)\]
on their respective domains
\[D(A)=\{f\in L^2(\R):~e^{2x}f\in L^2(\R)\}\]
and \[D(B)=\{f\in L^2(\R):~e^{-2x}f, e^{-x}f\in L^2(\R)\}.\] Then
obviously $A$ is self-adjoint while $B$ closable without being
closed.

Now, the operator $BA$ defined by $BAf(x)=(e^{2x}+e^x)f(x)$ on
\[D(BA)=\{f\in L^2(\R):~e^{2x}f, e^xf\in L^2(\R)\}\]
is plainly self-adjoint. Readers may also check that $AB$ is not
self-adjoint.
\end{exa}

Inspired by this example, we have:

\begin{pro}It is impossible to find two unbounded linear operators $A$ and
$B$, one of them is closable (without being closed) and the other is
self-adjoint, such that both $BA$ and $AB$ are self-adjoint.
\end{pro}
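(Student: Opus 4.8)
The plan is to argue by contradiction and to route the statement through the square case already settled in Corollary \ref{A2 s.a. A closable A closed}. Suppose two such operators $A$ and $B$ existed. Since the hypothesis that \emph{both} $AB$ and $BA$ be self-adjoint is symmetric in $A$ and $B$, I may assume without loss of generality that $A$ is self-adjoint (hence closed and densely defined) while $B$ is closable but not closed. The first small step is to notice that $B$ is then automatically densely defined: being self-adjoint, $AB$ is densely defined, and since $D(AB)\subseteq D(B)$ we get $\overline{D(B)}=H$; in particular $B^*$ exists and, $B$ being closable, is densely defined as well.

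Next I would pass to the $2\times 2$ operator matrix
\[M=\begin{pmatrix} 0 & A \\ B & 0 \end{pmatrix}\]
acting on $D(M)=D(B)\oplus D(A)$, which is dense in $H\oplus H$. I would check that $M$ is closable — for instance by exhibiting the dense domain $D(A)\oplus D(B^*)$ inside $D(M^*)$, or by observing that $M$ is contained in the closed operator $\begin{pmatrix} 0 & A \\ \overline{B} & 0 \end{pmatrix}$ — and that a short domain computation gives
\[M^2=\begin{pmatrix} AB & 0 \\ 0 & BA \end{pmatrix},\qquad D(M^2)=D(AB)\oplus D(BA).\]
Since $AB$ and $BA$ are self-adjoint with dense domains, the block-diagonal operator $M^2$ is self-adjoint. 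Corollary \ref{A2 s.a. A closable A closed}, applied to the closable densely defined $M$ with self-adjoint square, then forces $M$ to be closed.

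Finally I would read off the contradiction: the closedness of $M$ implies the closedness of $B$, because if $x_n\in D(B)$ with $x_n\to x$ and $Bx_n\to v$, then $(x_n,0)\to(x,0)$ while $M(x_n,0)=(0,Bx_n)\to(0,v)$, so $(x,0)\in D(M)$ and $M(x,0)=(0,v)$, i.e.\ $x\in D(B)$ and $Bx=v$. This contradicts the assumption that $B$ is not closed. I expect the only mildly delicate points to be the block-operator bookkeeping — that $D(M^2)$ is \emph{exactly} $D(AB)\oplus D(BA)$, that $(M^2)^*$ is the diagonal matrix of $(AB)^*$ and $(BA)^*$, and that $M$ is closable — but each of these is routine once the dense-definiteness of $B$ has been established, and the rest is a direct invocation of the already proved square case.
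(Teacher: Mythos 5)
Your argument is correct and is essentially the paper's own proof: you form the same anti-diagonal block operator (with the two blocks merely swapped), compute its square as $\mathrm{diag}(AB,BA)$, and invoke Corollary \ref{A2 s.a. A closable A closed} to force closedness, contradicting the non-closedness of $B$. The extra details you supply (dense definedness of $B$, closability of the matrix, and that closedness of the matrix gives closedness of $B$) are exactly the routine verifications the paper leaves implicit.
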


\begin{proof}Let $B$ be a solely closable operator with domain $D(B)$, and
let $A$ be a self-adjoint operator with domain $D(A)$. Then set
\[T=\left(
      \begin{array}{cc}
        0 & B \\
        A & 0 \\
      \end{array}
    \right)
\]
which is defined on $D(T)=D(A)\oplus D(B)$. Clearly, $T$ is only
closable (i.e. $T$ is not closed). Now,
\[T^2=\left(
        \begin{array}{cc}
          BA & 0 \\
          0 & AB \\
        \end{array}
      \right).
\]

So if $BA$ and $AB$ were both self-adjoint, it would ensue that
$T^2$ is self-adjoint which, given the closability of $T$, would
yield the closedness of $T$ (by Corollary \ref{A2 s.a. A closable A
closed}). Since this is absurd, at least one of $AB$ and $BA$ must
be non-self-adjoint.
\end{proof}

There remains to investigate the very related question: Is there a
densely defined, closable and unclosed operator $A$ such that $A^2$
is closed, densely defined, and obeys $\sigma(A^2)=\C$? By looking
closely at the foregoing results, we actually see that the condition
$\sigma(A^2)=\C$ is superfluous.

A simple example is already available in the case of the absence of
the density of $D(A^2)$, as alluded to in the introduction. So,
things are more interesting when $D(A^2)$ is dense, and there is a
counterexample in this case too. But, to get the sought example, we
need a densely defined closed operator $T$ such that $D(T^2)=D(T)$.
Recall now that S. \^{O}ta (\cite{Ota-nilpotent-idempotent})
introduced the concept of an unbounded idempotent operator: Let $T$
be a non-necessarily bounded operator with a dense domain $D(T)$. We
say that $T$ is idempotent if $T^2$ is well defined and
\[T^2=T \text{ on }D(T).\]

Surprisingly, S. \^{O}ta did not provide any example of an unbounded
closed idempotent operator even though he did provide other examples
of unclosable idempotents. So, let us supply such an example:

\begin{exa}(\cite{Mortad-cex-BOOK})
Let $B$ be an unbounded closed operator with domain $D(B)\subset H$
and let $I$ be the identity operator on all of $H$. Define
\[T=\left(
      \begin{array}{cc}
        I & B \\
        0 & 0 \\
      \end{array}
    \right)
\]
with $D(T)=H\times D(B)$. Then $T$ is densely defined, closed and
unbounded. Since
\[D(T^2)=\{(x,y)\in H\times D(B):(x+By,0)\in H\times D(B)\}=D(T),\]
we see that
\[T^2=\left(
      \begin{array}{cc}
        I & B \\
        0 & 0 \\
      \end{array}
    \right)\left(
      \begin{array}{cc}
        I & B \\
        0 & 0 \\
      \end{array}
    \right)=\left(
      \begin{array}{cc}
        I &B \\
        0 & 0 \\
      \end{array}
    \right)=T.\]
\end{exa}

With this example at hand, we may now construct a densely defined,
closable and unclosed operator $A$ such that $A^2$ is densely
defined and closed (hence necessarily $\sigma(A^2)=\C$):

\begin{exa}\label{21/03/2021 EXAMPLE}(\cite{Mortad-cex-BOOK})
Let $T$ be a densely defined, unbounded and closed operator $T$ such
that $D(T^2)=D(T)\subset H$, and consider the identity operator on
$H$ restricted to $D(T)$, noted $I_{D(T)}$. Next, let
\[A=\left(
      \begin{array}{cc}
        0 & T \\
        I_{D(T)} & 0 \\
      \end{array}
    \right)
\]
where $D(A)=D(T)\times D(T)$. Clearly, $A$ is closable but unclosed.
Let $0_{D(T)}$ and $T_{D(T^2)}$ designate the restrictions of the
zero operator and of $T$ to the subspaces $D(T)$ and $D(T^2)$
respectively. We then have
\[A^2=\left(
      \begin{array}{cc}
        0 & T \\
        I_{D(T)} & 0 \\
      \end{array}
    \right)\left(
      \begin{array}{cc}
        0 & T \\
        I_{D(T)} & 0 \\
      \end{array}
    \right)=\left(
      \begin{array}{cc}
        T & 0_{D(T)} \\
        0_{D(T)} & T_{D(T^2)} \\
      \end{array}
    \right)=\left(
      \begin{array}{cc}
        T & 0 \\
        0 & T \\
      \end{array}
    \right).\]
Thus, $A^2$ is patently closed on the dense $D(T)\times D(T)$, as
wished.
\end{exa}

\begin{cor}If $T=\left(
      \begin{array}{cc}
        I & B \\
        0 & 0 \\
      \end{array}
    \right)$, then $\sigma(T)=\C$ for any unbounded closed (even self-adjoint) operator $B$, where
$I\in B(H)$ is the usual identity.
\end{cor}

\begin{proof}Let $A$ be as in Example \ref{21/03/2021 EXAMPLE}, and
so $A^2=\left(
      \begin{array}{cc}
        T & 0 \\
        0 & T \\
      \end{array}
    \right)$. Hence $\sigma(A^2)=\sigma(T)$. But, we already
    observed above that $\sigma(A^2)=\C$, i.e. $\sigma(T)=\C$, as
    needed.
\end{proof}

We would like to generalize Corollary \ref{17/05/2021 CORO} to the
case $A^n$ where $n\geq3$? A direct generalization is obviously
false. For instance, if $A=e^{{2\pi i}/3}I$, then $A$ is unitary and
non-self-adjoint, yet $A^3=I$ is obviously positive. If $A^n$ and
$A^{n+1}$ are both positive and self-adjoint for some $n$, then $A$
is self-adjoint and positive whenever it is hyponormal (non
necessarily bounded). After having shown this result, a better
version came to our minds. It is interesting to see how elementary
number theory may be called on to show results in (unbounded)
operator theory.

\begin{thm}\label{hyponormal powers co-primes s.a. THM}
Let $A$ be a non-necessarily bounded hyponormal operator with domain
$D(A)\subset H$. If $A^p$ and $A^q$ are two self-adjoint operators,
where $p$ and $q$ are two co-prime numbers, then $A$ is
self-adjoint.
\end{thm}

\begin{proof}
By a similar method as in the proof of Theorem \ref{THM main A2
closed} or a consequence of Theorem
\ref{124589763254789995412300000000000000333265 THM} below, $A$ is
closed. Let $\lambda$ be in $\sigma(A)$. Hence $\lambda^p\in\R$ and
$\lambda^q\in\R$. If $\lambda=0$, then $\sigma(A)=\{0\}$, whereby
$A$ becomes self-adjoint.

Now, let $\lambda\neq0$. By Bézout's theorem in arithmetic, we
know that $ap+bq=1$ for some integers $a$ and $b$.  Therefore,
$\lambda ^{ap}, \lambda^{bq}\in \R-\{0\}$, and so
$\lambda^{ap+bq}\in \R-\{0\}$. In other words, $\lambda \in
\R-\{0\}$. So in all cases, $\lambda$ must be real. This says that
$A$ has a real spectrum, and whence $A$ becomes self-adjoint.
\end{proof}

\begin{rema}
A similar result holds by replacing "self-adjoint" by "positive" in
both the assumption and the conclusion, in the case of bounded
operators; and by replacing "self-adjoint" by "self-adjoint and
positive" in the unbounded case.
\end{rema}

\begin{rema}
The assumption $p$ and $q$ being relatively prime numbers is
essential. To show its importance, consider as above $A=e^{{2\pi
i}/3}I$. Then both $A^3$ and $A^6$ are positive, yet $A$ is not even
self-adjoint.
\end{rema}

The idea of the proof of the previous theorem may be applied to show
some similar results. As is known, square roots of normal operators
need not be normal, even when $\dim H=2$. Requiring a square root of
a normal operator to be invertible still does not help (cf. Theorem
3 in \cite{kitt-normality-primes-1984}). The next result may
therefore be useful.

\begin{thm}\label{invertible two powers co-primes normal THM}
Let $A$ be a boundedly invertible non-necessarily bounded operator
with domain $D(A)$. If $A^p$ and $A^q$ are normal, where $p$ and $q$
are two co-prime numbers, then $A$ is normal.
\end{thm}

\begin{proof}As above, we have by Bézout's theorem that $ap+bq=1$ for some integers $a$ and
$b$. Necessarily, one of $ap$ and $bq$ has to be negative, and WLOG
assume it is $bq$. Since $A$ is boundedly invertible, we have
$A^{-1}A\subset I$ and $AA^{-1}=I$. Hence
\[A^{bq}A^{ap}\subset A=A^{ap}A^{bq}.\]

Since $A$ is boundedly invertible, $A^{bq}\in B(H)$. Since $A^p$ and
$A^q$ are also normal, $A^{ap}$ and $A^{bq}$ remain normal. In other
words, the bounded and normal $A^{bq}$ commutes with the normal
$A^{ap}$. Hence, by say Theorem 2.2 in \cite{Meziane-Mortad-I},
\[A^{ap}A^{bq}=A\]
is normal, as needed.
\end{proof}

\textit{Mutatis mutandis}, a similar result holds for self-adjoint
as well as self-adjoint positive operators. Consulting Lemma 3.1 in
\cite{Jung-Mortad-Stochel} comes in handy, and so we omit the proof.

\begin{thm}
Let $A$ be a boundedly invertible non-necessarily bounded operator
with domain $D(A)$. If $A^p$ and $A^q$ are self-adjoint (resp.
self-adjoint and positive), where $p$ and $q$ are two co-prime
numbers, then $A$ is self-adjoint (resp. self-adjoint and positive).
\end{thm}

\begin{rema}
To see why the condition "$p$ and $q$ being co-prime numbers" may
not just dropped, it suffices to take an invertible non-normal
square root $A$ of the identity matrix, and then $A^2=A^4=I$. An
explicit example would be $A=\left(
        \begin{array}{cc}
          2 & 1 \\
          -3 & -2 \\
        \end{array}
      \right)$.
\end{rema}

It is well known that if $S$ is a positive self-adjoint operator
which commutes with some $R\in B(H)$, i.e. $RS\subset SR$, then
$R\sqrt{S}\subset \sqrt S R$ where $\sqrt S$ designates the unique
positive self-adjoint square root of $S$. See e.g.
\cite{Sebestyen-Tarcsay-self-adjoint square ROOT} for a new proof.

What about arbitrary roots? The answer is negative even on
finite-dimensional spaces. For example, consider a $2\times 2$
matrix, noted $T$. Then $T$ commutes with $A^2=I$, and we can
obviously choose $T$ such that it does not commute with an
\textit{invertible} square root of $I$ (for example the matrix $A$
in the preceding remark). The same $T$ also commute with $A^4$. So,
a judicious choice of exponents must be made if we want a positive
result.

\begin{pro}
Let $A$ be a non-necessarily bounded operator which is boundedly
invertible, and let $T\in B(H)$. If $T$ commutes with both $A^p$ and
$A^q$, i.e. $TA^p\subset A^pT$ and $TA^q\subset A^qT$ for some
relatively prime numbers $p$ and $q$, then $TA\subset AT$, i.e. $T$
commutes with $A$.
\end{pro}

\begin{proof}
The proof \textit{iterum} uses Bézout's theorem. So, $ap+bq=1$
for some integers $a$ and $b$ (choose $bq$ to be negative). Since
$TA^p\subset A^pT$ and $TA^q\subset A^qT$, it follows that
$TA^{ap}\subset A^{ap}T$ and $TA^{bq}=A^{bq}T$ for $A^{bq},T\in
B(H)$. Hence
\[TA^{ap}\subset A^{ap}T\Longrightarrow TA^{ap}A^{bq}\subset A^{ap}TA^{bq}=A^{ap}A^{bq}T.\]
As in the proof of Theorem \ref{invertible two powers co-primes
normal THM}, we derive $A^{ap}A^{bq}=A$. Therefore, $TA\subset AT$,
as wished.
\end{proof}

\section{Main Results: The case of polynomials}

Now, we treat the more general case of $p(A)$ where $p$ is a complex
polynomial.

\begin{thm}\label{124589763254789995412300000000000000333265 THM} Let $p$ be a complex polynomial of one variable of degree
$n$. Assume that $A$ is a closable operator in a Hilbert (or Banach)
space such that ($p(A)$ is densely defined and)
$\sigma[p(A)]\neq\C$. Then $A$ is closed.
\end{thm}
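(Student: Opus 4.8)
The plan is to mimic the strategy of Theorem \ref{THM main A2 closed}, factoring $p(A)-\lambda I$ into linear factors. Write $p(z)-\lambda = c\prod_{k=1}^{n}(z-\alpha_k)$ where $c\neq 0$ is the leading coefficient and $\alpha_1,\dots,\alpha_n$ are the (complex) roots. Pick $\lambda\in\C\setminus\sigma[p(A)]$, so that $p(A)-\lambda I$ is boundedly invertible; call its bounded everywhere-defined inverse $B$. Since all the factors $A-\alpha_k I$ commute with one another on the relevant domains, we may write, for any fixed index $j$,
\[
I = (p(A)-\lambda I)B = c\,(A-\alpha_j I)\Bigl[\,\textstyle\prod_{k\neq j}(A-\alpha_k I)\Bigr]B.
\]
The bracketed operator, composed with $B$ on the right, is closable (it is a product of closable operators with a bounded operator on the right, using that $A$ closable $\Rightarrow A-\alpha_k I$ closable and that products of the form $(\text{closable})\cdot(\text{bounded})$ stay closable, applied inductively), and its domain must be all of $H$ because it dominates the domain of $I$; hence by the closed graph theorem it lies in $B(H)$. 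Thus $A-\alpha_j I$ is right invertible with a bounded everywhere-defined right inverse, for every $j$.

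Next I would upgrade ``right invertible'' to ``boundedly invertible.'' From the factorization $p(A)-\lambda I$ is injective (it is bijective), and since $p(A)-\lambda I = c\,[\prod_{k\neq j}(A-\alpha_k I)](A-\alpha_j I)$ with the product written so that $A-\alpha_j I$ is the \emph{last} factor applied, injectivity of the whole forces injectivity of $A-\alpha_j I$. So $A-\alpha_j I$ is bijective; let $C$ be any (a priori unbounded) left inverse, necessarily defined on all of $H$. Denoting by $B_j\in B(H)$ the bounded right inverse found above, the usual chain $C(A-\alpha_j I)\subset I \Rightarrow C(A-\alpha_j I)B_j \subset B_j \Rightarrow C\subset B_j$ combined with the fact that both are everywhere defined gives $C=B_j\in B(H)$. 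Hence $A-\alpha_j I$ is boundedly invertible, therefore closed, and so $A$ is closed.

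The main obstacle I anticipate is purely bookkeeping about domains: one must check that the factorization $p(A)-\lambda I = c\prod_k (A-\alpha_k I)$ really holds as an \emph{equality} of operators (not merely an inclusion) on $D(A^n)$, and that the factors can be freely reordered, so that in the identity $I=(p(A)-\lambda I)B$ one is genuinely entitled to peel off whichever linear factor one wants as the outermost or innermost one. This is the point where one uses that $A^n$ is densely defined (to guarantee $D(p(A))=D(A^n)$ is the natural domain and the binomial-type manipulations are valid) and that the $\alpha_k$'s are scalars, so all the $A-\alpha_k I$ commute on $D(A^n)$. Once the algebra of the factorization is pinned down, the closability-plus-closed-graph argument and the left/right-inverse argument are exactly as in Theorem \ref{THM main A2 closed}, with no new ideas required.
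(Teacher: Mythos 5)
Your proposal is correct and follows essentially the same route as the paper's proof: factor $p(A)-\lambda I$ into commuting linear factors on $D(A^n)$, use the bounded inverse $B$ together with closability and the closed graph theorem (inductively, peeling off one factor at a time) to get a bounded right inverse for a linear factor, deduce its injectivity by reordering so it is the innermost factor, and then run the left-inverse maximality argument of Theorem \ref{THM main A2 closed} to conclude bounded invertibility and hence closedness of $A$. The domain bookkeeping you flag (equality of the factorization on $D(A^n)$ and the freedom to reorder factors) is exactly the point the paper dispatches by its induction remark, so no new ideas are missing.
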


\begin{rema} Observe that the closedness of $p(A)$ is tacitly assumed
  because $\sigma[p(A)]\neq\C$.
\end{rema}

\begin{proof}Let $\lambda$ be in $\C\setminus \sigma[p(A)]$. We may also assume that the leading coefficient of $p(A)$ equals 1. By the fundamental theorem of Algebra, we know that
there are complex numbers $\mu_1$, $\mu_2$, $\cdots$, $\mu_n$ such
that
\[p(z)-\lambda=(z-\mu_1)(z-\mu_2)\cdots (z-\mu_n)\]
where $z\in\C$. Hence
\[p(A)-\lambda I=(A-\mu_1 I)(A-\mu_2I)\cdots (A-\mu_nI).\]
The previous is in effect a full equality for \[D[p(A)-\lambda
I]=D[(A-\mu_1 I)(A-\mu_2I)\cdots (A-\mu_nI)]=D(A^n)\] (which may be
checked using a proof by induction). Since $p(A)-\lambda I$ is
boundedly invertible, we have that
\[(A-\mu_1 I)(A-\mu_2I)\cdots
(A-\mu_nI)B=I\] for some $B\in B(H)$. But $(A-\mu_2I)\cdots
(A-\mu_nI)B\in B(H)$. Indeed, since $A$ is closable, so is
$A-\mu_nI$ and so $(A-\mu_nI)B$ is closable. Hence $(A-\mu_nI)B\in
B(H)$ for $D[(A-\mu_nI)B]=H$.

Now, $(A-\mu_{n-1})(A-\mu_nI)B\in B(H)$ by using a similar argument.
By induction, it may then be shown that $(A-\mu_2I)\cdots
(A-\mu_nI)B\in B(H)$. Thus, $A-\mu_1 I$ is right invertible. Since
\[(A-\mu_1 I)(A-\mu_2I)\cdots (A-\mu_nI)=(A-\mu_2 I)(A-\mu_3I)\cdots (A-\mu_n I)(A-\mu_1I),\]
it is seen that $A-\mu_1 I$ is one-to-one. Finally, as in the proof
of Theorem \ref{THM main A2 closed}, we may conclude that $A$ is
closed, as suggested.
\end{proof}

As in the case of $p(z)=z^2$, we have:

\begin{cor}Let $p$ be a complex polynomial of one variable. Assume that $A$ is a closable non-closed operator in a Hilbert (or
Banach) space such that $p(A)$ is closed. Then
\[\sigma[p(A)]=\C.\]
\end{cor}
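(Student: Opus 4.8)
The plan is to mimic the pattern already established for the case $p(z)=z^2$: the corollary is the contrapositive of the immediately preceding theorem. So the entire argument is a one-line deduction, exactly parallel to the corollary following Theorem \ref{THM main A2 closed}.

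First I would suppose, for contradiction, that $\sigma[p(A)]\neq\C$. Then the hypotheses of the preceding theorem are met: $A$ is closable and $\sigma[p(A)]\neq\C$ (note that $p(A)$ closed and densely defined is part of the standing data here, since $p(A)$ is assumed closed, hence automatically densely defined is not needed — but in any case the theorem's hypothesis ``$p(A)$ is densely defined and $\sigma[p(A)]\neq\C$'' is satisfied because a non-empty resolvent set forces closedness, and closedness of $p(A)$ was assumed). Applying that theorem yields that $A$ is closed, contradicting the assumption that $A$ is non-closed. Therefore $\sigma[p(A)]=\C$, as claimed.

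There is no real obstacle here; the only point worth a sentence is to confirm that the invocation of the previous theorem is legitimate, i.e. that ``$p(A)$ closed'' together with ``$A$ closable'' supplies precisely what that theorem needs once one additionally assumes $\sigma[p(A)]\neq\C$ toward a contradiction. Since $p(A)$ being closed and having non-empty resolvent set is consistent, and since the preceding theorem's conclusion ($A$ closed) directly contradicts the hypothesis, the proof is immediate. I would write it as follows.

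\begin{proof}
Suppose, to the contrary, that $\sigma[p(A)]\neq\C$. Since $A$ is closable and $p(A)$ is closed (hence densely defined), the previous theorem applies and gives that $A$ is closed. This contradicts the assumption that $A$ is non-closed. Consequently, $\sigma[p(A)]=\C$.
\end{proof}
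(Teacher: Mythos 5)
Your proof is correct and is exactly the argument the paper intends: the corollary is just the contrapositive of the preceding theorem, precisely as in the $p(z)=z^2$ case. One small caveat: your parenthetical ``$p(A)$ closed, hence densely defined'' is not a valid implication (closedness does not give dense definedness), though this looseness mirrors the paper's own statement of the corollary and does not affect the substance of the deduction.
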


Next, we generalize Proposition \ref{25/0/2020}.

\begin{thm}\label{26/07/2020 THM}
Let $p$ be a polynomial of one variable of degree $n$. Assume that
$A$ is a symmetric (not necessarily densely defined) operator in a
Hilbert space $H$ such that $p(A)$ is self-adjoint.  Then $A$ is
self-adjoint.
\end{thm}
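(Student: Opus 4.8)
The plan is to reuse the two-step pattern behind Proposition~\ref{25/0/2020}: first deduce that $A$ is closed, and then promote ``closed and symmetric'' to ``self-adjoint'' by a spectral argument. We may plainly assume $n\geq 1$. Since $p(A)$ is self-adjoint, it is densely defined, and from $D(p(A))=D(A^n)\subset D(A)$ it follows that $A$ is densely defined; being symmetric, $A$ is then closable. As $p(A)$ is self-adjoint we also have $\sigma[p(A)]\subset\R$, so in particular $\sigma[p(A)]\neq\C$; hence the polynomial version of Theorem~\ref{THM main A2 closed} proved just above applies and yields that $A$ is in fact closed.

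It remains to show that the closed, densely defined, symmetric operator $A$ is self-adjoint. The key intermediate step is the inclusion $p(\sigma(A))\subset\sigma[p(A)]$, valid for any closed $A$: if $p(\mu)\in\rho[p(A)]$, one factors $p(z)-p(\mu)=c\,(z-\mu)(z-\mu_2)\cdots(z-\mu_n)$, observes that the corresponding operator factorization $p(A)-p(\mu)I=c\,(A-\mu I)(A-\mu_2 I)\cdots(A-\mu_n I)$ is a genuine equality on $D(A^n)$, and then runs exactly the argument of Theorem~\ref{THM main A2 closed} (right invertibility of $A-\mu I$ via the closed graph theorem, injectivity by commuting the linear factors) to conclude that $A-\mu I$ is boundedly invertible, i.e. $\mu\in\rho(A)$. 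In particular, were $\sigma(A)=\C$, we would obtain $\sigma[p(A)]\supset p(\sigma(A))=p(\C)=\C$, which contradicts $\sigma[p(A)]\subset\R$; hence $\sigma(A)\neq\C$.

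Now I would invoke the classification of the spectrum of a closed symmetric operator: according to the deficiency indices $d_\pm=\dim\ker(A^*\mp iI)$, the set $\sigma(A)$ is either all of $\C$, or one of the two closed half-planes $\{\Ima z\geq 0\}$ and $\{\Ima z\leq 0\}$, or a subset of $\R$, this last case occurring precisely when $A$ is self-adjoint. The case $\sigma(A)=\C$ has just been ruled out. If $\sigma(A)$ were a closed half-plane, it would contain an open half-plane $\Omega$; but a non-constant polynomial is an open map, so $p(\Omega)$ would be a non-empty open subset of $\C$ and hence could not be contained in $\R$, contradicting $p(\Omega)\subset p(\sigma(A))\subset\sigma[p(A)]\subset\R$. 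Thus $\sigma(A)\subset\R$, and therefore $A$ is self-adjoint.

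The step I expect to require the most care is making the inclusion $p(\sigma(A))\subset\sigma[p(A)]$ fully rigorous for a closed symmetric operator not yet known to have non-empty resolvent set, and then quoting the trichotomy for the spectrum of a closed symmetric operator in precisely the shape needed; granted these, the openness of a non-constant polynomial finishes the argument at once. It is also worth stressing that establishing the closedness of $A$ first is genuinely necessary here, since an unclosed operator has $\sigma(A)=\C$ by convention, and then the spectral inclusion — and with it the whole argument — would collapse.
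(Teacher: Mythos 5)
Your proposal is correct, but it follows a genuinely different route from the paper's. The paper never passes through closedness of $A$ or through the spectrum of $A$ at all: it invokes the range criterion for self-adjointness of symmetric operators (Proposition 3.11 in \cite{SCHMUDG-book-2012}), picking $\lambda\in\rho[p(A)]$, factoring $p(A)-\lambda I=(A-\mu_1I)\cdots(A-\mu_nI)$ to get $\ran(A-\mu_1I)=H$, and then, using symmetry of $A$ together with $[p(A)-\lambda I]^*=p(A)-\overline{\lambda}I$ and the domain identity $D(A^n)$, upgrading $(A-\overline{\mu_1}I)\cdots(A-\overline{\mu_n}I)\subset p(A)-\overline{\lambda}I$ to an equality, which gives $\ran(A-\overline{\mu_1}I)=H$ as well; self-adjointness (and hence closedness) then comes in one shot. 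You instead first prove closedness via the polynomial analogue of Theorem \ref{THM main A2 closed}, then establish the spectral-mapping inclusion $p(\sigma(A))\subset\sigma[p(A)]$ for the now-closed $A$ (essentially re-running the same factorization argument, which is sound -- the only point needing a word is that an eigenvector of $A$ automatically lies in $D(A^n)$, so injectivity transfers), and finally combine the deficiency-index trichotomy for spectra of closed symmetric operators with the openness of non-constant polynomial maps to force $\sigma(A)\subset\R$ and conclude self-adjointness. What each approach buys: yours is more conceptual, reuses the closedness theorem, and isolates a reusable spectral inclusion, but it imports heavier standard machinery (the half-plane classification and the open mapping theorem for holomorphic functions); the paper's argument is more elementary and self-contained, needing only the surjectivity criterion and the same factorization/domain bookkeeping, and it does not need to know beforehand that $A$ is closed. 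Both proofs rest on the same unproved-but-routine identity $D[(A-\mu_1I)\cdots(A-\mu_nI)]=D(A^n)$, so you are not assuming anything the paper does not.
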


\begin{proof}
Since $A$ is symmetric, to show it is self-adjoint we may instead
show that $\ran(A-\mu I)=H$ and $\overline{\ran(A-\overline{\mu}
I)}=H$ where $\overline{\mu}$ is the complex conjugate of $\mu$ (see
Proposition 3.11 in \cite{SCHMUDG-book-2012}).

Let $\lambda\in\C\setminus \sigma[p(A)]$. As above, we may assume
that the leading coefficient of $p(A)$ is equal to 1. Write
\[p(A)-\lambda I=(A-\mu_1 I)(A-\mu_2I)\cdots (A-\mu_nI)\]
where $\mu_i,i=1,\cdots,n$ are complex numbers. As above, it can be
shown that $A-\mu_1 I$ is right invertible, i.e. it is surjective.
Since $p(A)-\lambda I$ is boundedly invertible, so is $[p(A)-\lambda
I]^*$. But
\[[p(A)-\lambda I]^*=p(A)-\overline{\lambda} I\]
as $p(A)$ is self-adjoint. Since $A$ is symmetric and using some
standard properties, we see that
\begin{align*}(A-\overline{\mu_1} I)(A-\overline{\mu_2}I)\cdots (A-\overline{\mu_n}I)&\subset (A^*-\overline{\mu_1}
I)(A^*-\overline{\mu_2}I)\cdots (A^*-\overline{\mu_n}I)\\&\subset
[(A-\mu_1 I)(A-\mu_2I)\cdots
(A-\mu_nI)]^*\\&=p(A)-\overline{\lambda} I.
\end{align*}
Therefore,
\[(A-\overline{\mu_1} I)(A-\overline{\mu_2}I)\cdots (A-\overline{\mu_n}I)=p(A)-\overline{\lambda} I\]
as both sides have the same domain, namely $D(A^n)$. Thus,
$A-\overline{\mu_1}I$ too is surjective. Accordingly, $A$ is
self-adjoint.
\end{proof}

\begin{cor}
Let $p$ be a polynomial of one variable of degree $n$ with real
coefficients. Assume that $A$ is a symmetric (not necessarily
densely defined) operator in a Hilbert space $H$ such that $p(A)$ is
normal. Then $A$ is self-adjoint.
\end{cor}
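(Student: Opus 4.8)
The plan is to reduce the claim about $p(A)$ being normal to the already-established case where $p(A)$ is self-adjoint, i.e.\ to Theorem \ref{26/07/2020 THM}. The key observation is that a symmetric operator whose polynomial image is \emph{normal} actually has that image \emph{self-adjoint}, exactly as in the quadratic case treated in the corollary following Proposition \ref{25/0/2020}. So the first step is to argue that $p(A)$ is symmetric. Since $A$ is symmetric we have $A\subset A^*$, and because $p$ has real coefficients the formal computation $p(A)\subset p(A^*)\subset p(A)^*$ goes through (one uses that $A-\mu_i I\subset A^*-\mu_i I$ for real $\mu_i$, composes these inclusions, and notes that the adjoint of a product contains the reversed product of adjoints, together with $p$ having real coefficients so the $\mu_i$'s that appear are handled symmetrically). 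Hence $p(A)\subset p(A)^*$, i.e.\ $p(A)$ is symmetric.

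Next I would invoke the standard fact already used in the body of the paper: an operator that is simultaneously symmetric and normal is self-adjoint. Indeed, if $N$ is normal then $N=N^{**}\supset$ nothing new, but more simply: a normal operator is closed and maximally so among symmetric extensions in the relevant sense — concretely, a symmetric operator $N$ with $NN^*=N^*N$ satisfies $N\subset N^*$ and $N^*$ is an extension of $N$ with $\|N^*x\|=\|Nx\|$ on $D(N)$, and closedness of $N$ together with symmetry forces $N=N^*$ (this is precisely the argument given in the corollary after Proposition \ref{25/0/2020}). Applying this to $N=p(A)$, which is both symmetric by the previous step and normal by hypothesis, we conclude that $p(A)$ is self-adjoint.

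Finally, with $p(A)$ self-adjoint and $A$ symmetric, Theorem \ref{26/07/2020 THM} applies verbatim and yields that $A$ is self-adjoint, which is exactly the claim.

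The only place requiring any care — and the main (minor) obstacle — is the very first step: verifying cleanly that $p(A)$ is symmetric when $p$ has real coefficients and $A$ is symmetric but possibly not densely defined. One must be slightly careful about domains, since $D(p(A))=D(A^n)$ and inclusions of unbounded products must be chained in the correct order; but this is the same bookkeeping already performed in the proof of Theorem \ref{26/07/2020 THM} (where the analogous inclusion $(A-\overline{\mu_1}I)\cdots(A-\overline{\mu_n}I)\subset p(A)^*$ is established), so it presents no real difficulty. Everything else is a direct citation of results proved above.
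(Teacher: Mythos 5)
Your argument coincides with the paper's own proof: show that $p(A)$ is symmetric because $A\subset A^*$ and $p$ has real coefficients (so $p(A)\subset p(A^*)\subset [p(A)]^*$), invoke the standard fact that a symmetric normal operator is self-adjoint, and then apply Theorem \ref{26/07/2020 THM}. The proposal is correct and essentially identical to the paper's reasoning, the only difference being that you spell out the domain bookkeeping for the symmetry of $p(A)$ which the paper simply asserts.
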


\begin{proof}
Since $A$ is symmetric, so is $p(A)$ because $p$ has real
coefficients. Hence
\[p(A)\subset p(A^*)\subset [p(A)]^*,\]
i.e. $p(A)$ is symmetric. Given its normality, it becomes
self-adjoint and so $A$ is self-adjoint by Theorem \ref{26/07/2020
THM}.
\end{proof}

Amazingly, M. Uchiyama showed that a symmetric quasinormal operator
is self-adjoint (the last corollary in
\cite{Uchiyama-1993-QUASINORMAL}). So, the next consequence is
stated without proof.

\begin{cor}Let $p$ be a polynomial of one variable of degree $n$ with real
coefficients. Assume that $A$ is a symmetric (not necessarily
densely defined) operator in a Hilbert space $H$ such that $p(A)$ is
quasinormal. Then $A$ is self-adjoint.
\end{cor}

The results above may not be generalized to functions $f(A)$ even
when the latter is well defined. Let us give a counterexample.

\begin{exa}
Let $A$ be a densely defined symmetric (hence closable) unclosed
operator such that $A^*A$ is self-adjoint. Then $|A|^2$ is
self-adjoint while $A$ is not closed. An explicit realization
(inspired by one which appeared in \cite{sebestyen-tarcsay-TT* has
an extension}) reads:

Let $T=i\frac{d}{dx}$ be defined on $H^1(\R)=\{f\in L^2(\R):f'\in
L^2(\R)\}$. Then $T$ is self-adjoint (hence $T^*T=T^2$ is
self-adjoint). Set $A=T|_{H^2(\R)}$ where $H^2(\R)=\{f\in
L^2(\R):f''\in L^2(\R)\}$. Then $A$ is not closed and $T^*=A^*$.
Since
\[D(A^*A)=D(T^2)=H^2(\R),\]
it follows that $A^*A=T^2$. Thus, since $T^2$ is self-adjoint so is
$A^*A$.
\end{exa}

\section*{Conjectures}
\begin{enumerate}
  \item If it can be shown that a closed hyponormal operator having
  a quasinormal power is necessarily quasinormal, then a similar
  idea as in Corollary \ref{subnormal power normal is normal CORO}
  may be applied to show that a closed hyponormal operator having a
  normal power is automatically normal. This result is perhaps known
  to a few specialists, but we could not find it anywhere. In addition,
  if a proof exists somewhere, then the one we have proposed here would be simpler.
  \item Let $A$ be a closed densely defined paranormal operator, with $\sigma(A)\subset \R$. Must $A$ be self-adjoint? If this is true, then Theorem \ref{hyponormal powers co-primes s.a.
  THM} could be improved by replacing the hyponormality assumption
  by a paranormality one.
  \item Powers of (unbounded) paranormal operators are normal (as in say \cite{Stochel-Sza-subnormality-BOOK-unpublished?}), and so are inverses of paranormal operators (as in \cite{Bala-Ramesh-paranormal} or in \cite{Stochel-Sza-subnormality-BOOK-unpublished?}).
  So can Theorem \ref{invertible two powers co-primes normal THM} be
  generalized to closed paranormal operators?
\end{enumerate}

\end{document}